\newtheorem{theorem}{Theorem}[section]
\newtheorem{lemma}[theorem]{Lemma}
\newtheorem{corollary}[theorem]{Corollary}
\theoremstyle{definition}
\theoremstyle{remark}
\newtheorem{remark}[theorem]{Remark}
\numberwithin{equation}{section}
\begin{document}
\title[A General Double Inequality]{A General Double Inequality Related to Operator Means and Positive Linear Maps}
\author[R. Kaur, M. Singh, J.S. Aujla and M. S. Moslehian]{Rupinderjit Kaur, Mandeep Singh, Jaspal Singh Aujla, M.S. Moslehian}
\address{Rupinderjit Kaur and Mandeep Singh: Department of Mathematics, Sant Longowal Institute of Engineering and
Technology, Longowal-148106, Punjab, India}
\email{rupinder\_grewal\_86@yahoo.co.in} \email{msrawla@yahoo.com}
\address{Jaspal Singh Aujla: Department of Mathematics, National Institute of
Technology, Jalandhar-144011, Punjab, India}
\email{aujlajs@nitj.ac.in and aujlajs@yahoo.com}
\address{Mohammad Sal Moslehian: Department of Pure Mathematics, Center of Excellence in
Analysis on Algebraic Structures (CEAAS), Ferdowsi University of
Mashhad, P.O. Box 1159, Mashhad 91775, Iran.}
\email{moslehian@ferdowsi.um.ac.ir and moslehian@member.ams.org}

\keywords{Positive operator, operator mean, positive linear map,
Cauchy--Schwarz inequality, Ando inequality, Diaz--Metcalf type
inequality, reverse inequality.}

\subjclass[2010]{Primary 47A63; Secondary 47A30, 47A64.}

\begin{abstract}
Let $A,B\in \mathbb{B}(\mathscr{H})$ be such that $0<b_{1}I \leq A
\leq a_{1}I$ and $0<b_{2}I \leq B \leq a_{2}I$ for some scalars
$0<b_{i}< a_{i},\;\; i=1,2$ and
$\Phi:\mathbb{B}(\mathscr{H})\rightarrow\mathbb{B}(\mathscr{K})$
 be a positive linear map. We show that for any operator mean $\sigma$ with the representing function $f$, the double inequality
$$
\omega^{1-\alpha}(\Phi(A)\#_{\alpha}\Phi(B))\le
(\omega\Phi(A))\nabla_{\alpha}\Phi(B)\leq
\frac{\alpha}{\mu}\Phi(A\sigma B)
$$
holds, where
$\mu=\frac{a_{1}b_{1}(f(b_{2}a_{1}^{-1})-f(a_{2}b_{1}^{-1}))}{b_{1}b_{2}-a_{1}a_{2}},~$
$\nu=\frac{a_{1}a_{2}f(b_{2}a_{1}^{-1})-b_{1}b_{2}f(a_{2}b_{1}^{-1})}{a_{1}a_{2}-b_{1}b_{2}},~$
$\omega=\frac{\alpha \nu}{(1-\alpha)\mu}$ and $\#_{\alpha}$
($\nabla_{\alpha}$, resp.) is the weighted geometric (arithmetic,
resp.) mean for $\alpha \in (0,1)$.

 As applications, we present several generalized operator
inequalities including Diaz--Metcalf and reverse Ando type
inequalities. We also give some related inequalities involving
Hadamard product and operator means.
\end{abstract}
\maketitle

\section{Introduction}

In what follows, $\mathbb{B}(\mathscr{H}) $ denotes the
$C^{*}$-algebra of all bounded linear operators acting on a Hilbert
space $(\mathscr{H}, \langle\cdot,\cdot \rangle)$ and $I$ stands for
the identity operator. A selfadjoint operator
$A\in\mathbb{B}(\mathscr{H})$ is said to be positive (strictly
positive, resp.) if $\langle A\xi,\xi \rangle\geq 0$\;($\langle
A\xi,\xi \rangle>0, \xi \ne 0$, resp.) for all $\xi\in\mathscr{H}$
and we write $A\geq 0$\, ($A>0$, resp.). For selfadjoint operators
$A,B \in \mathbb{B}(\mathscr{H})$, by
$A\geq B$\; ($A>B$, resp.) we mean $ A- B\geq 0$ \; ($A-B>0$, resp.). A linear map
$\Phi:\mathbb{B}(\mathscr{H}) \rightarrow \mathbb{B}(\mathscr{K})$
between $C^*$-algebras is called positive (strictly positive, resp.)
if it maps positive (strictly positive, resp.) operators into
positive (strictly positive, resp.) operators and is said to be
unital if it maps identity operator to identity operator in the
corresponding $C^{*}$-algebra.

By an operator monotone function, we mean a continuous real-valued
function $f$ defined on an interval $J$ such that $A\geq B$ implies
$f(A)\geq f(B)$ for all self adjoint operators $A, B $ with spectra
in $J$. Some structure theorems on operator monotone functions can
be found in \cite{FHPS}.

The axiomatic theory for operator means for
pairs of positive operators have been developed by Kubo and Ando
\cite{KA}. A binary operation $\sigma$ defined on the set of
strictly positive operators is called an operator mean provided that
\vspace{-1.0mm}
\begin{enumerate}
\item[(i)] $I\sigma I=I$;
\item [(ii)] $C^*(A\sigma B)C\leq(C^*AC)\sigma(C^*BC)$;
\item [(iii)] $A_n\downarrow A$ and $B_n\downarrow B$ imply $(A_n\sigma B_n)\downarrow A\sigma B$, where $A_n\downarrow A$ means that $A_1\geq A_2\geq \cdots$ and $A_n\to A$ as $n\to \infty$ in the strong operator topology;
\item [(iv)] $A \leq B$ and $C \leq D$ imply that $A\sigma C \leq B\sigma D$\,.
\end{enumerate}
\vspace{-1.0mm}
There exists an affine order isomorphism between the class of
operator means and the class of positive operator monotone functions
$f$ defined on $(0,\infty)$ with $f(1)=1$ via $f(t)I=I\sigma
(tI)\,\,(t> 0)$. In addition, $A \sigma B =
A^{1/2}f(A^{-1/2}BA^{-1/2})A^{1/2}$ for all strictly positive
operators $A, B$. The operator monotone function $f$ is called the
representing function of $\sigma$. Using a limit argument by
$A_\varepsilon=A+\varepsilon I$, one can extend the definition of
$A\sigma B$ to positive operators as well. The operator means
corresponding to operator monotone functions $(1-\alpha)+\alpha t$
and $t^{\alpha}$ with $0\leq \alpha \leq 1$ are called weighted arithmetic
and weighted geometric means and are denoted by $\nabla_{\alpha}$
and $\#_{\alpha}$, respectively.
In particular $\nabla_{1/2}$ and $\#_{1/2}$ or simply written as $\nabla$ and $\#$ are called arithmetic and geometric mean, respectively.\\

In this paper we establish a general double inequality involving
operator means and positive linear maps, which unifies and includes
the recent results of \cite{MNS, S} concerning Diaz--Metcalf and
reverse Ando type inequalities. We also give some related
inequalities involving Hadamard product and operator means.

\section{Main Result}
We start this section with our main result. It extends the reverse
Ando's inequality presented in \cite{S} and some Diaz--Metcalf type
inequalities in \cite{MNS} as we see in the sequel.

\begin{theorem}\label{main} Let $A,B\in \mathbb{B}(\mathscr{H})$ be
such that $0<b_{1}I \leq A \leq a_{1}I$ and $0<b_{2}I \leq B \leq
a_{2}I$ for some scalars $0<b_{i}< a_{i},\;\; i=1,2$ and
$\Phi:\mathbb{B}(\mathscr{H})\rightarrow\mathbb{B}(\mathscr{K})$
 be a positive linear map. Then for any operator mean $\sigma$ with the representing function $f$, the following double inequality holds:
\begin{equation}\label{(1)}
\omega^{1-\alpha}(\Phi(A)\#_{\alpha}\Phi(B))\le
(\omega\Phi(A))\nabla_{\alpha}\Phi(B)\leq
\frac{\alpha}{\mu}\Phi(A\sigma B)\,,
\end{equation}
where
$\mu=\frac{a_{1}b_{1}(f(b_{2}a_{1}^{-1})-f(a_{2}b_{1}^{-1}))}{b_{1}b_{2}-a_{1}a_{2}},~$
$\nu=\frac{a_{1}a_{2}f(b_{2}a_{1}^{-1})-b_{1}b_{2}f(a_{2}b_{1}^{-1})}{a_{1}a_{2}-b_{1}b_{2}},~$
$\omega=\frac{\alpha \nu}{(1-\alpha)\mu}$ and $\alpha \in (0,1).$
\end{theorem}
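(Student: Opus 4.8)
The plan is to split the double inequality \eqref{(1)} into its two halves and treat each separately; the right-hand inequality carries the real content, while the left-hand one is a packaged form of the operator arithmetic--geometric mean inequality.

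For the right-hand inequality, I would first record the algebraic identity $(1-\alpha)\omega=\alpha\nu/\mu$, immediate from $\omega=\frac{\alpha\nu}{(1-\alpha)\mu}$. Expanding the weighted arithmetic mean,
$$(\omega\Phi(A))\nabla_{\alpha}\Phi(B)=(1-\alpha)\omega\Phi(A)+\alpha\Phi(B)=\frac{\alpha}{\mu}\bigl(\nu\Phi(A)+\mu\Phi(B)\bigr),$$
so that, after cancelling the positive factor $\alpha/\mu$ (here $\mu>0$, since $f$ is increasing and the evaluation points satisfy $b_{2}a_{1}^{-1}<a_{2}b_{1}^{-1}$), the right-hand inequality becomes equivalent to the Diaz--Metcalf type estimate $\nu\Phi(A)+\mu\Phi(B)\le\Phi(A\sigma B)$. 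By the linearity and positivity, hence monotonicity, of $\Phi$, it suffices to prove the operator inequality $\nu A+\mu B\le A\sigma B$.

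To obtain this, I would pass through the underlying scalar inequality. A direct computation, using $t_{2}-t_{1}=(a_{1}a_{2}-b_{1}b_{2})/(a_{1}b_{1})$ with $t_{1}:=b_{2}a_{1}^{-1}$ and $t_{2}:=a_{2}b_{1}^{-1}$, shows that $\mu$ and $\nu$ are precisely the slope and $y$-intercept of the secant line of $f$ joining $t_{1}$ and $t_{2}$; that is, $\mu t+\nu$ is the chord of $f$ on $[t_{1},t_{2}]$. Since the representing function $f$ is operator monotone on $(0,\infty)$, it is concave, so it dominates its chord: $f(t)\ge\mu t+\nu$ for $t_{1}\le t\le t_{2}$. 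I would then lift this to operators by functional calculus. Writing $A\sigma B=A^{1/2}f(T)A^{1/2}$ with $T:=A^{-1/2}BA^{-1/2}$, the bounds $b_{2}I\le B\le a_{2}I$ and $a_{1}^{-1}I\le A^{-1}\le b_{1}^{-1}I$ give $b_{2}a_{1}^{-1}I\le T\le a_{2}b_{1}^{-1}I$, so the spectrum of $T$ lies in $[t_{1},t_{2}]$. The scalar inequality then yields $f(T)\ge\mu T+\nu I$, and congruence by $A^{1/2}$ gives
$$A\sigma B=A^{1/2}f(T)A^{1/2}\ge\mu A^{1/2}TA^{1/2}+\nu A=\mu B+\nu A,$$
as required. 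For the left-hand inequality, I would invoke the homogeneity of the weighted geometric mean, $(\lambda X)\#_{\alpha}Y=\lambda^{1-\alpha}(X\#_{\alpha}Y)$ for $\lambda>0$, with $X=\Phi(A)$, $Y=\Phi(B)$ and $\lambda=\omega$, rewriting the left member as $(\omega\Phi(A))\#_{\alpha}\Phi(B)$; the inequality then reduces to the standard operator weighted arithmetic--geometric mean inequality $P\#_{\alpha}Q\le P\nabla_{\alpha}Q$ applied to $P=\omega\Phi(A)$ and $Q=\Phi(B)$, itself a consequence of the pointwise bound $s^{\alpha}\le(1-\alpha)+\alpha s$ via the order isomorphism between means and representing functions.

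The only genuinely delicate point is the recognition that the constants $\mu,\nu$ are exactly the chord parameters of $f$; once this identification is verified, the remainder is a routine combination of concavity, functional calculus, and the monotonicity and linearity of $\Phi$. I expect the bookkeeping in matching $\mu$ and $\nu$ to the secant's slope and intercept to be the most error-prone step, so I would carry that algebra out explicitly before invoking concavity.
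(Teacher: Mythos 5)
Your proposal is correct and follows essentially the same route as the paper's own proof: you identify $\mu t+\nu$ as the chord of the concave representing function $f$ over $[b_{2}a_{1}^{-1},\,a_{2}b_{1}^{-1}]$, lift the scalar inequality $\mu t+\nu\le f(t)$ to $\nu A+\mu B\le A\sigma B$ via functional calculus on $A^{-1/2}BA^{-1/2}$ and congruence by $A^{1/2}$, apply $\Phi$, and settle the left half by homogeneity of $\#_{\alpha}$ plus the operator arithmetic--geometric mean inequality, exactly as the paper does (only the order of scaling by $\alpha/\mu$ versus lifting to operators is permuted). The one point the paper handles that you pass over is the verification that $\nu>0$ (hence $\omega>0$), proved there from the fact that $x^{-1}f(x)$ is operator monotone decreasing; your homogeneity step $(\omega X)\#_{\alpha}Y=\omega^{1-\alpha}(X\#_{\alpha}Y)$ and indeed the expression $\omega^{1-\alpha}$ itself tacitly require this, so you should add that one line.
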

\begin{proof}
The conditions $0<b_{1}I\leq A\leq a_{1}I$ and $0<b_{2}I\leq B\leq
a_{2}I$ implies that $0<b_{2}a_{1}^{-1}A\leq b_2I\leq B \leq
a_2I\leq a_{2}b_{1}^{-1}A$. Consequently,
$0<b_{2}a_{1}^{-1}<a_{2}b_{1}^{-1}.$ The function
$f:(0,\infty)\rightarrow (0,\infty)$ being operator monotone is
strictly increasing and concave \cite[Corollary 1.12]{FHPS}. Therefore
$x^{-1}f(x)$ is operator monotone decreasing (see \cite[Corollary
1.14]{FHPS}). This implies that $\mu$ and $\nu$ are positive. In
fact,
$$\mu=\frac{1}{a_2b_1^{-1}-b_2a_1^{-1}}\left(f(a_{2}b_{1}^{-1})-f(b_{2}a_{1}^{-1})\right)>0$$
and
$$\nu=\frac{a_2b_2}{a_1a_2-b_1b_2}\left(b_2^{-1}a_1f(b_2a_1^{-1})-a_2^{-1}b_1f(a_2b_1^{-1})\right)>0\,.$$
The first inequality in \eqref{(1)} follows on using the weighted
arithmetic-geometric mean inequality $\omega^{1-\alpha}(X\# _\alpha
Y)=(\omega X)\# _\alpha Y\leq (\omega X)\nabla_\alpha Y$.

For the second inequality in \eqref{(1)}, consider $\mu
t+\nu$. Note that $f(t)$ and the line $\mu t+\nu$ intersect at the
points $(a_{2}b_{1}^{-1},f(a_{2}b_{1}^{-1}))$ and
$(b_{2}a_{1}^{-1},f(b_{2}a_{1}^{-1})).$ Thus, since $f(t)$ is
concave \cite[Corollary 1.12]{FHPS}, we see that
\begin{equation}\label{(2)}
\mu t+\nu \leq f(t)
\end{equation}
for all $t\in[b_{2}a_{1}^{-1},a_{2}b_{1}^{-1}]$. So,
$$\alpha t+(1-\alpha)\left[\frac{\alpha \nu}{(1-\alpha)\mu}\right] \leq \frac{\alpha}{\mu}f(t)$$
for all $\alpha \in (0,1).$ Hence, since $b_2a_1^{-1}I \leq
A^{-1/2}BA^{-1/2} \leq a_2b_1^{-1}I$, we obtain
$$\alpha A^{-1/2}BA^{-1/2}+(1-\alpha)\left[\frac{\alpha \nu}{(1-\alpha)\mu}\right] I\leq \frac{\alpha}{\mu}f(A^{-1/2}BA^{-1/2})\,,$$
which implies
\begin{equation}\label{(3)}
\alpha B+(1-\alpha)\left[\frac{\alpha \nu}{(1-\alpha)\mu}\right]A \leq \frac{\alpha}{\mu}A\sigma B.
\end{equation}
Since $\Phi$ is positive and linear, \eqref{(3)} yields
$$ (\omega\Phi(A))\nabla_{\alpha}\Phi(B)\leq \frac{\alpha}{\mu}\Phi(A\sigma B).$$
\end{proof}
\begin{remark}
The condition $0<b_{i}< a_{i},\;\; i=1,2$ in Theorem \ref{main} can
be replaced by either $0<b_{1}< a_{1}\,{\rm and}\, 0<b_{2}\leq a_{2}$ or
$0<b_{1}\leq a_{1}\,{\rm and}\, 0<b_{2}< a_{2}$.
\end{remark}
\section{Applications}

Now we present several applications of our main Theorem \ref{main}.

\subsection{Diaz--Metcalf type inequalities}\label{msm}

Taking $\alpha=1/2$ and $\sigma=\#$, so that $f(t)=\sqrt{t},$ in the
second inequality of \eqref{main}, we get the following
Diaz--Metcalf type inequality of the second type:
\begin{corollary} \cite[Theorem 2.1]{MNS}
Let $A, B \in \mathbb{B}(\mathscr{H})$ be positive invertible
operators such that $m_1^2I \leq A \leq M_1^2I$ and $m_2^2I\leq
B\leq M_2^2I$ for some positive real numbers $m_1<M_1$ and $m_2
<M_2$ and $\Phi: \mathbb{B}(\mathscr{H}) \to
\mathbb{B}(\mathscr{K})$ be a positive linear map. Then
\begin{eqnarray*}
\frac{M_2m_2}{M_1m_1}\Phi(A)+\Phi(B) \leq
\left(\frac{M_2}{m_1}+\frac{m_2}{M_1}\right)\Phi(A\sharp B)\,.
\end{eqnarray*}
\end{corollary}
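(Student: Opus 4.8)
The plan is to derive this corollary as a direct specialization of Theorem \ref{main}, taking $\alpha = 1/2$ and $\sigma = \#$, the geometric mean, whose representing function is $f(t) = \sqrt{t}$. First I would match the hypotheses by setting $b_1 = m_1^2$, $a_1 = M_1^2$, $b_2 = m_2^2$ and $a_2 = M_2^2$, so that the assumed bounds $m_1^2 I \le A \le M_1^2 I$ and $m_2^2 I \le B \le M_2^2 I$ are precisely the bounds $0 < b_1 I \le A \le a_1 I$ and $0 < b_2 I \le B \le a_2 I$ required in the theorem, and $m_i < M_i$ guarantees $0 < b_i < a_i$.

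The next step is to evaluate the scalar constants $\mu$, $\nu$ and $\omega$ of \eqref{(1)} at these parameters. The key simplification comes from the square-root structure of $f$: here $f(b_2 a_1^{-1}) = \sqrt{m_2^2/M_1^2} = m_2/M_1$ and $f(a_2 b_1^{-1}) = \sqrt{M_2^2/m_1^2} = M_2/m_1$. Inserting these into the definitions of $\mu$ and $\nu$, I expect the numerator and denominator of each expression to share the common factor $M_1 M_2 - m_1 m_2$, which cancels and leaves the closed forms $\mu = \frac{M_1 m_1}{M_1 M_2 + m_1 m_2}$ and $\nu = \frac{M_2 m_2}{M_1 M_2 + m_1 m_2}$. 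Since $\alpha = 1/2$ makes $\alpha/(1-\alpha) = 1$, we then get $\omega = \nu/\mu = \frac{M_2 m_2}{M_1 m_1}$, which is exactly the coefficient of $\Phi(A)$ appearing on the left of the claimed inequality.

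Finally I would substitute into the second inequality of \eqref{(1)}. Because $\nabla_{1/2}$ is the ordinary arithmetic mean, the left-hand side becomes $\frac{1}{2}\bigl(\omega\Phi(A) + \Phi(B)\bigr)$, while on the right $\frac{\alpha}{\mu} = \frac{1}{2\mu} = \frac{1}{2}\bigl(\frac{m_2}{M_1} + \frac{M_2}{m_1}\bigr)$, using $\frac{1}{\mu} = \frac{M_1 M_2 + m_1 m_2}{M_1 m_1} = \frac{m_2}{M_1} + \frac{M_2}{m_1}$. The factor $\frac{1}{2}$ common to both sides cancels, and one reads off directly $\frac{M_2 m_2}{M_1 m_1}\Phi(A) + \Phi(B) \le \bigl(\frac{M_2}{m_1} + \frac{m_2}{M_1}\bigr)\Phi(A \# B)$, which is the asserted inequality (with $\#$ and $\sharp$ denoting the same geometric mean).

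There is no genuine analytic obstacle: the whole argument is a substitution followed by algebraic bookkeeping. The only point that demands care is the cancellation inside $\mu$ and $\nu$, which hinges on the factorizations $M_1^2 M_2^2 - m_1^2 m_2^2 = (M_1 M_2 - m_1 m_2)(M_1 M_2 + m_1 m_2)$ and $m_1^2 m_2^2 - M_1^2 M_2^2 = (m_1 m_2 - M_1 M_2)(m_1 m_2 + M_1 M_2)$; one must verify that the numerators of $\mu$ and $\nu$ carry the same linear factor as their respective denominators, so that the ratios collapse to the clean forms above. Once these cancellations are confirmed, matching coefficients with the target inequality is immediate.
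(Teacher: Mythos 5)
Your proposal is correct and follows exactly the paper's own route: the paper obtains this corollary by specializing Theorem \ref{main} with $\alpha=1/2$, $\sigma=\#$, $f(t)=\sqrt{t}$, and $b_1=m_1^2$, $a_1=M_1^2$, $b_2=m_2^2$, $a_2=M_2^2$, just as you do. Your computed values $\mu = \frac{M_1m_1}{M_1M_2+m_1m_2}$, $\nu = \frac{M_2m_2}{M_1M_2+m_1m_2}$, $\omega = \frac{M_2m_2}{M_1m_1}$ and the final cancellation of the factor $\frac{1}{2}$ all check out.
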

If $m^2A \leq B \leq M^2A$ for some positive real numbers $m<M$,
then by considering $\Psi(C)=\Phi(A^{1/2}CA^{1/2})$ and noting that
$m^2I\leq A^{-1/2}BA^{-1/2}\leq M^2I$ and $1I\leq I\leq 1I$ we
obtain the following inequality:
$$Mm\Psi(I)+\Psi(A^{-1/2}BA^{-1/2})\leq (M+m)\Psi(I\#
A^{-1/2}BA^{-1/2})\,.$$ Therefore we reach to the following
Diaz--Metcalf type inequality of the first type:
\begin{corollary} \cite[Theorem 2.1]{MNS}
Let $A, B \in \mathbb{B}(\mathscr{H})$ be positive invertible
operators such that $m^2A \leq B \leq M^2A$ for some positive real
numbers $m<M$ and $\Phi: \mathbb{B}(\mathscr{H}) \to
\mathbb{B}(\mathscr{K})$ be a positive linear map. Then
\begin{eqnarray*}
Mm\Phi(A)+\Phi(B) \leq (M+m)\Phi(A\sharp B).
\end{eqnarray*}
\end{corollary}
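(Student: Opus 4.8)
The plan is to deduce this first-type Diaz--Metcalf inequality from the already-established second-type inequality (the preceding Corollary) by a change of variables that normalises $A$ to the identity. Concretely, I would replace the pair $(A,B)$ by $(I,\,A^{-1/2}BA^{-1/2})$ and the map $\Phi$ by the compressed map $\Psi(C)=\Phi(A^{1/2}CA^{1/2})$, which converts the hypothesis $m^{2}A\le B\le M^{2}A$ into the box constraints required to invoke the second-type inequality.

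First I would check that $\Psi$ is again a positive linear map: linearity is clear, while positivity follows because $C\ge 0$ forces $A^{1/2}CA^{1/2}\ge 0$ and $\Phi$ is positive. Next I would record the spectral bounds of the two arguments. Conjugating $m^{2}A\le B\le M^{2}A$ by $A^{-1/2}$ gives $m^{2}I\le A^{-1/2}BA^{-1/2}\le M^{2}I$, whereas the identity trivially satisfies $1\cdot I\le I\le 1\cdot I$. Hence the pair $(I,\,A^{-1/2}BA^{-1/2})$ fulfils the hypotheses of the preceding Corollary with $m_{1}=M_{1}=1$, $m_{2}=m$ and $M_{2}=M$; the degenerate choice $m_{1}=M_{1}$ is legitimate by the Remark following Theorem~\ref{main}, since the remaining pair obeys $m<M$ strictly.

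Applying the second-type inequality to $\Psi$, $I$ and $A^{-1/2}BA^{-1/2}$ then yields
\begin{equation*}
Mm\,\Psi(I)+\Psi(A^{-1/2}BA^{-1/2})\le (M+m)\,\Psi\bigl(I\sharp A^{-1/2}BA^{-1/2}\bigr).
\end{equation*}
It remains to translate the three terms back. The identities $\Psi(I)=\Phi(A)$ and $\Psi(A^{-1/2}BA^{-1/2})=\Phi(B)$ are immediate from the definition of $\Psi$, so the entire argument reduces to identifying $\Psi\bigl(I\sharp A^{-1/2}BA^{-1/2}\bigr)$ with $\Phi(A\sharp B)$.

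The step I expect to be the crux is exactly this identification, which rests on the congruence invariance of the geometric mean under the invertible transform $A^{1/2}$, namely $A^{1/2}\bigl(I\sharp A^{-1/2}BA^{-1/2}\bigr)A^{1/2}=A\sharp B$. This equality is not one of the defining axioms in the form I need, but it follows from the transformer inequality (property (ii)) applied to both $A^{1/2}$ and its inverse $A^{-1/2}$, which together force equality for an invertible congruence. Granting it, $\Psi\bigl(I\sharp A^{-1/2}BA^{-1/2}\bigr)=\Phi\bigl(A^{1/2}(I\sharp A^{-1/2}BA^{-1/2})A^{1/2}\bigr)=\Phi(A\sharp B)$, and the displayed inequality collapses to $Mm\,\Phi(A)+\Phi(B)\le (M+m)\,\Phi(A\sharp B)$, as required.
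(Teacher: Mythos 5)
Your proposal is correct and follows essentially the same route as the paper: the paper likewise defines $\Psi(C)=\Phi(A^{1/2}CA^{1/2})$, applies the second-type Diaz--Metcalf inequality to the pair $\bigl(I,\,A^{-1/2}BA^{-1/2}\bigr)$ with bounds $m^{2}I\le A^{-1/2}BA^{-1/2}\le M^{2}I$ and $1I\le I\le 1I$, and translates back via $\Psi(I\sharp A^{-1/2}BA^{-1/2})=\Phi(A\sharp B)$. Your treatment is in fact slightly more careful than the paper's, since you explicitly invoke the Remark after Theorem~\ref{main} to justify the degenerate case $m_{1}=M_{1}=1$ and you justify the congruence invariance of $\sharp$, both of which the paper leaves implicit.
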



\subsection{Inequalities complementary to Ando's inequality}
Ando's inequality \cite{A} states that if $A, B \in
\mathbb{B}(\mathscr{H})$ are positive operators, $\alpha\in[0,1]$
and $\Phi$ is a positive linear map, then
$$\Phi(A\#_{\alpha} B) \leq \Phi(A) \#_{\alpha}\Phi(B)\,.$$
The following result is an additive reverse of the second type of
this inequality:

\begin{corollary} Let $A,B$, $\Phi$ and $\mu ,\nu$ and $\alpha $ be as in Theorem \ref{main}. Then
\begin{eqnarray}\label{g2}
\Phi(A)\#_{\alpha}\Phi(B)-\Phi(A\#_{\alpha}B) \leq
\left((1-\alpha)(\mu
\alpha^{-1})^{\frac{\alpha}{\alpha-1}}-\nu\right)\Phi(A)\,.
\end{eqnarray}
In particular, when $\alpha=1/2.$
\begin{eqnarray}\label{com2}
\Phi(A)\#\Phi(B) -\Phi(A\# B)\leq
\left(\frac{1}{4\mu}-\nu\right)\min\{a_{1},a_{2}\}I
\end{eqnarray}
whenever $\Phi(I)\leq I$.
\end{corollary}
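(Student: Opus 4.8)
The plan is to specialise Theorem~\ref{main} to the weighted geometric mean itself, taking $\sigma=\#_{\alpha}$ with representing function $f(t)=t^{\alpha}$, so that $A\sigma B=A\#_{\alpha}B$ and the constants $\mu,\nu$ take the values attached to $f(t)=t^{\alpha}$. With this choice the second inequality of \eqref{(1)} reads $(\omega\Phi(A))\nabla_{\alpha}\Phi(B)\le\frac{\alpha}{\mu}\Phi(A\#_{\alpha}B)$. Expanding the weighted arithmetic mean on the left and substituting $\omega=\frac{\alpha\nu}{(1-\alpha)\mu}$ turns this into $\frac{\alpha\nu}{\mu}\Phi(A)+\alpha\Phi(B)\le\frac{\alpha}{\mu}\Phi(A\#_{\alpha}B)$; multiplying by $\mu/\alpha>0$ gives the lower bound
\[
\nu\,\Phi(A)+\mu\,\Phi(B)\le\Phi(A\#_{\alpha}B),
\]
which is the first of the two ingredients I need.

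The second ingredient is a complementary Young-type \emph{upper} bound $\Phi(A)\#_{\alpha}\Phi(B)\le c\,\Phi(A)+\mu\,\Phi(B)$ with $c=(1-\alpha)(\mu\alpha^{-1})^{\frac{\alpha}{\alpha-1}}$, valid for all positive $\Phi(A),\Phi(B)$. Setting $X=\Phi(A)$, $Y=\Phi(B)$ and $Z=X^{-1/2}YX^{-1/2}$, one has $X\#_{\alpha}Y=X^{1/2}Z^{\alpha}X^{1/2}$, so (for invertible $X$, the general case following by the usual $X+\varepsilon I$ regularisation) a congruence by $X^{-1/2}$ makes the asserted inequality equivalent to $Z^{\alpha}\le cI+\mu Z$, which by functional calculus follows from the scalar estimate $t^{\alpha}-\mu t\le c$ $(t>0)$. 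The constant is sharp: $h(t)=t^{\alpha}-\mu t$ is concave, because $h''(t)=\alpha(\alpha-1)t^{\alpha-2}<0$ for $0<\alpha<1$; its unique critical point is $t^{*}=(\alpha/\mu)^{1/(1-\alpha)}$, and a direct computation gives $h(t^{*})=(1-\alpha)(\mu\alpha^{-1})^{\frac{\alpha}{\alpha-1}}=c=\max_{t>0}h(t)$. Subtracting the lower bound above from this upper bound yields
\[
\Phi(A)\#_{\alpha}\Phi(B)-\Phi(A\#_{\alpha}B)\le(c-\nu)\Phi(A),
\]
which is exactly \eqref{g2}. Moreover $c-\nu\ge0$, since the chord estimate \eqref{(2)} gives $t^{\alpha}-\mu t\ge\nu$ on $[b_{2}a_{1}^{-1},a_{2}b_{1}^{-1}]$ and hence $c=\max_{t>0}(t^{\alpha}-\mu t)\ge\nu$.

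For \eqref{com2} I set $\alpha=1/2$, where the constant collapses to $c=\tfrac12(2\mu)^{-1}=\tfrac{1}{4\mu}$, so \eqref{g2} becomes $\Phi(A)\#\Phi(B)-\Phi(A\#B)\le\bigl(\tfrac{1}{4\mu}-\nu\bigr)\Phi(A)$. It then remains to replace $\Phi(A)$ on the right by a scalar multiple of $I$. From $A\le a_{1}I$ together with $\Phi(I)\le I$, positivity and linearity of $\Phi$ give $\Phi(A)\le a_{1}\Phi(I)\le a_{1}I$, and since $\tfrac{1}{4\mu}-\nu\ge0$ this already produces the bound $\bigl(\tfrac{1}{4\mu}-\nu\bigr)a_{1}I$. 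To reach the recorded factor $\min\{a_{1},a_{2}\}$ one exploits in addition the symmetry $A\#B=B\#A$ of the geometric mean, for which the interchange of $A$ and $B$ sends $(\mu,\nu)\mapsto(\nu,\mu)$, thereby permitting a parallel estimate phrased in terms of $\Phi(B)\le a_{2}I$.

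I expect the second ingredient to be the crux: pinning down the sharp constant $c$ as $\max_{t>0}(t^{\alpha}-\mu t)$ and reducing the operator inequality $Z^{\alpha}\le cI+\mu Z$ to its scalar form through the congruence by $X^{-1/2}$. The algebraic simplification of the second inequality of \eqref{(1)}, the sign check $c-\nu\ge0$, and the passage from $\Phi(A)$ to $a_{1}I$ under $\Phi(I)\le I$ are comparatively routine. The one genuinely delicate point in the special case is the justification of the factor $\min\{a_{1},a_{2}\}$: because interchanging $A$ and $B$ also swaps $(\mu,\nu)\mapsto(\nu,\mu)$ and hence replaces the coefficient $\tfrac{1}{4\mu}-\nu$ by $\tfrac{1}{4\nu}-\mu$, care is needed to see how the two estimates combine to yield precisely $\min\{a_{1},a_{2}\}$ against the single coefficient $\tfrac{1}{4\mu}-\nu$.
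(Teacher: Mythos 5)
Your derivation of \eqref{g2} is correct and is essentially the paper's own proof, reorganized. The paper combines exactly your two ingredients: the chord inequality \eqref{(2)} pushed through the functional calculus and $\Phi$ (your lower bound $\nu\Phi(A)+\mu\Phi(B)\le\Phi(A\#_{\alpha}B)$, which is the paper's inequality \eqref{(3)} rescaled), and the scaled arithmetic--geometric mean inequality
\begin{equation*}
\Phi(A)\#_{\alpha}\Phi(B)\le(1-\alpha)(\mu\alpha^{-1})^{\frac{\alpha}{\alpha-1}}\Phi(A)+\mu\Phi(B),
\end{equation*}
which is precisely your Young-type bound with the sharp constant $c=\max_{t>0}(t^{\alpha}-\mu t)$; subtracting the first from the second gives \eqref{g2} in both treatments.

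The genuine gap is in \eqref{com2}, and it is exactly the point you flagged at the end but left open. For $f(t)=\sqrt{t}$ one computes $\mu=\frac{\sqrt{a_{1}b_{1}}}{\sqrt{a_{1}a_{2}}+\sqrt{b_{1}b_{2}}}$ and $\nu=\frac{\sqrt{a_{2}b_{2}}}{\sqrt{a_{1}a_{2}}+\sqrt{b_{1}b_{2}}}$, so interchanging $A$ and $B$ indeed swaps $\mu\leftrightarrow\nu$, and the symmetry argument only yields
\begin{equation*}
\Phi(A)\#\Phi(B)-\Phi(A\# B)\le\min\left\{\left(\tfrac{1}{4\mu}-\nu\right)a_{1},\ \left(\tfrac{1}{4\nu}-\mu\right)a_{2}\right\}I,
\end{equation*}
which does not imply \eqref{com2} because the coefficient $\tfrac{1}{4\mu}-\nu$ is not symmetric in the data. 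Moreover, no argument can close this gap: \eqref{com2} as stated is false. Take $A=\mathrm{diag}(4,1)$, $B=\mathrm{diag}(1,2)$ (so $a_{1}=4$, $b_{1}=1$, $a_{2}=2$, $b_{2}=1$) and $\Phi(X)=\tfrac{1}{2}\mathrm{tr}(X)$, a positive linear map with $\Phi(I)=1\le 1$. Then $\mu=\frac{2}{2\sqrt{2}+1}$, $\nu=\frac{\sqrt{2}}{2\sqrt{2}+1}$, and the right-hand side of \eqref{com2} equals $\frac{(2\sqrt{2}-1)^{2}}{4(2\sqrt{2}+1)}\approx 0.2183$, whereas
\begin{equation*}
\Phi(A)\#\Phi(B)-\Phi(A\# B)=\sqrt{\tfrac{15}{4}}-\tfrac{2+\sqrt{2}}{2}\approx 0.2294.
\end{equation*}
(Both forms of \eqref{g2} do hold here: $0.2294\le(\tfrac{1}{4\mu}-\nu)\Phi(A)\approx 0.2729$ and $0.2294\le(\tfrac{1}{4\nu}-\mu)\Phi(B)\approx 0.2315$; it is only the passage to $\min\{a_{1},a_{2}\}$ against the single coefficient $\tfrac{1}{4\mu}-\nu$ that fails.) You should know that the paper's own proof of \eqref{com2} is a one-line appeal to the symmetry of $\Phi(A)\#\Phi(B)$ and $\Phi(A\#B)$ in $A$ and $B$, which overlooks exactly the coefficient swap you noticed; so your hesitation identified a real error in the paper rather than a trick you were missing.
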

\begin{proof} From inequality \eqref{(2)} we get
$$
(1-\alpha)(\mu \alpha^{-1})^{\frac{\alpha}{\alpha-1}}+\alpha(\mu
\alpha^{-1}) t \leq f(t)-(\nu+(\alpha-1)(\mu
\alpha^{-1})^{\frac{\alpha}{\alpha-1}}).
$$
As in the proof of Theorem \ref{main} the above inequality yields
\begin{align*}
(1-\alpha)(\mu \alpha^{-1})^{\frac{\alpha}{\alpha-1}}\Phi(A)&+\alpha
(\mu
\alpha^{-1})\Phi(B)\\
&\leq \Phi(A\sigma B)-(\nu+(\alpha-1)(\mu
\alpha^{-1})^{\frac{\alpha}{\alpha-1}})\Phi(A).
\end{align*}
Now on using weighted arithmetic-geometric mean inequality on the
left hand side of the above inequality we get
\begin{align*} \Phi(A)\#_{\alpha}\Phi(B)
&\leq (1-\alpha)\left((\mu
\alpha^{-1})^{\frac{1}{\alpha-1}}\right)^\alpha\Phi(A)+\alpha
\left((\mu \alpha^{-1})^{\frac{1}{\alpha-1}}\right)^{\alpha-1}\Phi(B) \\
&\leq \Phi(A\sigma B)-\left(\nu+(\alpha-1)(\mu
\alpha^{-1})^{\frac{\alpha}{\alpha-1}}\right)\Phi(A)\,.
\end{align*}
Taking $\sigma =\#_{\alpha}$ and using the given conditions on $A,B$
we get the desired inequality \eqref{g2}. Inequality \eqref{com2}
immediately follows from \eqref{g2} for $\alpha=1/2$ because both
$\Phi(A)\#\Phi(B)$ and $\Phi(A\#B)$ are symmetric with respect to
$A$ and $B$.
\end{proof}
Using the technique in
Subsection \ref{msm} we can get the following
reverse Ando inequality of the second type:
\begin{corollary} \cite[Theorem 1]{S}
Let $A, B \in \mathbb{B}(\mathscr{H})$ be positive invertible
operators such that $mA \leq B \leq MA$ for some positive real
numbers $m<M$ and $\Phi: \mathbb{B}(\mathscr{H}) \to
\mathbb{B}(\mathscr{K})$ be a positive linear map. Then for $\alpha \in (0,1),$
\begin{eqnarray*}
\Phi(A)\#_{\alpha}\Phi(B)-\Phi(A\#_{\alpha}B) \leq \left[
(1-\alpha)\left(\frac{M^\alpha-m^\alpha}{\alpha(M-m)}\right)^{\frac{\alpha}{\alpha-1}}-\frac{Mm^\alpha-mM^\alpha}{M-m}\right]\Phi(A)\,.
\end{eqnarray*}
\end{corollary}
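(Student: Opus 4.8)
The plan is to reduce this normalized statement to the additive reverse of Ando's inequality \eqref{g2}, using exactly the congruence substitution of Subsection~\ref{msm}. First I would introduce the positive linear map $\Psi:\mathbb{B}(\mathscr{H})\rightarrow\mathbb{B}(\mathscr{K})$ given by $\Psi(C)=\Phi(A^{1/2}CA^{1/2})$; it inherits positivity and linearity from $\Phi$ because $C\mapsto A^{1/2}CA^{1/2}$ is itself a positive linear map. Since $A$ is positive and invertible, the hypothesis $mA\le B\le MA$ is equivalent to $mI\le A^{-1/2}BA^{-1/2}\le MI$, while trivially $I\le I\le I$. Thus I may apply the previous corollary after replacing the pair $(A,B)$ by $(I,\,A^{-1/2}BA^{-1/2})$ and $\Phi$ by $\Psi$, taking $b_{1}=a_{1}=1$, $b_{2}=m$ and $a_{2}=M$; the choice $b_{1}=a_{1}$ with $b_{2}<a_{2}$ is permitted by the Remark following Theorem~\ref{main}.

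Next I would specialize the constants to the representing function $f(t)=t^{\alpha}$ of $\#_{\alpha}$ and to these endpoints. Substituting $b_{1}=a_{1}=1$, $b_{2}=m$, $a_{2}=M$ into the defining formulas for $\mu$ and $\nu$ yields $\mu=\frac{M^{\alpha}-m^{\alpha}}{M-m}$ and $\nu=\frac{Mm^{\alpha}-mM^{\alpha}}{M-m}$, both of which are positive for $0<m<M$ and $\alpha\in(0,1)$. Consequently the coefficient $(1-\alpha)(\mu\alpha^{-1})^{\frac{\alpha}{\alpha-1}}-\nu$ appearing on the right of \eqref{g2} becomes exactly the bracketed expression in the statement.

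It then remains to transport the three operator terms of \eqref{g2} back through $\Psi$. For this I would use $\Psi(I)=\Phi(A)$ and $\Psi(A^{-1/2}BA^{-1/2})=\Phi(B)$, so that $\Psi(I)\#_{\alpha}\Psi(A^{-1/2}BA^{-1/2})=\Phi(A)\#_{\alpha}\Phi(B)$, together with the identity $I\#_{\alpha}X=X^{\alpha}$ and the formula $A\#_{\alpha}B=A^{1/2}(A^{-1/2}BA^{-1/2})^{\alpha}A^{1/2}$ recorded in the introduction. These give $\Psi\bigl(I\#_{\alpha}(A^{-1/2}BA^{-1/2})\bigr)=\Phi(A\#_{\alpha}B)$, and the right-hand side $\bigl((1-\alpha)(\mu\alpha^{-1})^{\frac{\alpha}{\alpha-1}}-\nu\bigr)\Psi(I)$ equals the claimed multiple of $\Phi(A)$. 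Assembling these identities turns \eqref{g2} applied to $\Psi$ into precisely the asserted inequality.

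I expect the only delicate point to be bookkeeping rather than any genuine estimate: one must verify that the congruence $C\mapsto A^{1/2}CA^{1/2}$ commutes with the weighted geometric mean in the sense that $\Psi\bigl(I\#_{\alpha}(A^{-1/2}BA^{-1/2})\bigr)$ collapses to $\Phi(A\#_{\alpha}B)$, and that $\mu,\nu$ simplify as above. Both facts follow directly from the congruence formula $A\sigma B=A^{1/2}f(A^{-1/2}BA^{-1/2})A^{1/2}$ and the monotonicity already invoked in Theorem~\ref{main}, so no inequality beyond \eqref{g2} is needed.
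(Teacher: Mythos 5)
Your proposal is correct and is exactly the paper's argument: the authors prove this corollary by the one-line remark ``using the technique in Subsection~\ref{msm},'' which is precisely your substitution $\Psi(C)=\Phi(A^{1/2}CA^{1/2})$ applied to the pair $\bigl(I,\,A^{-1/2}BA^{-1/2}\bigr)$ with $b_{1}=a_{1}=1$, $b_{2}=m$, $a_{2}=M$ in inequality \eqref{g2}. Your computation of $\mu=\frac{M^{\alpha}-m^{\alpha}}{M-m}$ and $\nu=\frac{Mm^{\alpha}-mM^{\alpha}}{M-m}$, and the collapse $\Psi\bigl(I\#_{\alpha}(A^{-1/2}BA^{-1/2})\bigr)=\Phi(A\#_{\alpha}B)$, are all as intended.
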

\subsection{Shisha--Mond and Kalmkin--McLenaghan inequalities}
Now, we present a Kalmkin--McLenaghan type inequality due
to Seo \cite{S}:

\begin{theorem} Let $A,B$, $\Phi$ and $\mu ,\omega $ be as in Theorem
\ref{main}. Then
$$\Phi(A\sigma B)^{-1/2}\Phi(B)\Phi(A\sigma B)^{-1/2}-\Phi(A\sigma B)^{1/2}\Phi(A)^{-1}\Phi(A\sigma B)^{1/2}\leq \left(\mu^{-1}-2\sqrt{\omega}\right)I.$$
\end{theorem}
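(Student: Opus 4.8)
The plan is to reduce the whole statement to a single congruence-invariant operator inequality followed by a scalar AM--GM estimate. The starting point is the pointwise bound \eqref{(2)}, namely $\mu t+\nu\le f(t)$ for $t\in[b_2a_1^{-1},a_2b_1^{-1}]$. Applying the functional calculus to $T=A^{-1/2}BA^{-1/2}$ (whose spectrum lies in this interval), conjugating by $A^{1/2}$, and invoking the positivity and linearity of $\Phi$ exactly as in the proof of Theorem \ref{main} — but now \emph{without} the extra weighting by $\alpha$ — yields the clean linear inequality
$$\mu\,\Phi(B)+\nu\,\Phi(A)\le \Phi(A\sigma B).$$
One can equivalently obtain this by dividing inequality \eqref{(3)} by $\alpha$ and using $\frac{(1-\alpha)\omega}{\alpha}=\frac{\nu}{\mu}$; either way the parameter $\alpha$ drops out, and this single inequality will carry all the content.

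Write $C=\Phi(A\sigma B)$, which is strictly positive and hence invertible. Congruence-transforming the displayed inequality by $C^{-1/2}$ gives
$$\mu\,C^{-1/2}\Phi(B)C^{-1/2}+\nu\,C^{-1/2}\Phi(A)C^{-1/2}\le I.$$
Setting $X=C^{-1/2}\Phi(B)C^{-1/2}$ and $Y=C^{-1/2}\Phi(A)C^{-1/2}$, and noting that $Y^{-1}=C^{1/2}\Phi(A)^{-1}C^{1/2}=\Phi(A\sigma B)^{1/2}\Phi(A)^{-1}\Phi(A\sigma B)^{1/2}$, the target inequality is precisely $X-Y^{-1}\le(\mu^{-1}-2\sqrt{\omega})I$. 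From $\mu X+\nu Y\le I$ I solve for $X$, obtaining $X\le\mu^{-1}I-\mu^{-1}\nu\,Y$, so that
$$X-Y^{-1}\le \mu^{-1}I-\bigl(\mu^{-1}\nu\,Y+Y^{-1}\bigr).$$

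The crux is the lower bound $\mu^{-1}\nu\,Y+Y^{-1}\ge 2\sqrt{\nu/\mu}\,I$. Since $Y$ and $Y^{-1}$ commute, this is purely a matter of the functional calculus: for the scalar function $g(\lambda)=(\nu/\mu)\lambda+\lambda^{-1}$ on $(0,\infty)$ one has $\min_{\lambda>0}g(\lambda)=2\sqrt{\nu/\mu}$, attained at $\lambda=\sqrt{\mu/\nu}$, whence $g(Y)\ge 2\sqrt{\nu/\mu}\,I$. This is the one genuinely nontrivial move; everything preceding it is bookkeeping. Substituting yields $X-Y^{-1}\le(\mu^{-1}-2\sqrt{\nu/\mu})I$.

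Finally, I identify the constant: since the left-hand side carries no $\alpha$, the relevant value is $\omega=\nu/\mu$, which is exactly $\omega=\frac{\alpha\nu}{(1-\alpha)\mu}$ at $\alpha=1/2$; thus $2\sqrt{\nu/\mu}=2\sqrt{\omega}$ and the argument closes. I expect the only points needing care to be the justification of the operator AM--GM step through the spectral theorem, and the verification (already carried out in Theorem \ref{main}) that $\mu,\nu>0$, so that the square roots and the congruence manipulations by $C^{\pm1/2}$ are all legitimate.
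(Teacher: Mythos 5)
Your proof is correct and follows essentially the same route as the paper: both start from the $\alpha=1/2$ case of Theorem \ref{main} (equivalently, the $\alpha$-free inequality $\mu\Phi(B)+\nu\Phi(A)\le\Phi(A\sigma B)$ obtained from \eqref{(3)}), conjugate by $\Phi(A\sigma B)^{-1/2}$, and finish with the operator AM--GM bound $\omega Y+Y^{-1}\ge 2\sqrt{\omega}\,I$ for $Y=\Phi(A\sigma B)^{-1/2}\Phi(A)\Phi(A\sigma B)^{-1/2}$ and $\omega=\nu/\mu$. The only cosmetic difference is in how that last step is justified: the paper completes the square, using $\bigl(\sqrt{\omega}\,Y^{1/2}-Y^{-1/2}\bigr)^2\ge 0$ together with the commutativity of $Y^{1/2}$ and $Y^{-1/2}$, whereas you lift the scalar inequality $(\nu/\mu)\lambda+\lambda^{-1}\ge 2\sqrt{\nu/\mu}$ through the functional calculus.
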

\begin{proof} Now from \eqref{(1)} for $\alpha=1/2$ we get
$$\Phi(A\sigma B)^{-1/2}\Phi(B)\Phi(A\sigma B)^{-1/2}+ \omega \Phi(A\sigma B)^{-1/2}
\Phi(A)\Phi(A\sigma B)^{-1/2}\leq \mu^{-1}I\,,$$ whence
\begin{align*}
&\Phi(A\sigma B)^{-1/2}\Phi(B)\Phi(A\sigma B)^{-1/2}-\Phi(A\sigma
B)^{1/2}\Phi(A)^{-1}\Phi(A\sigma B)^{1/2}\\
&\leq \mu^{-1}I-\omega \Phi(A\sigma B)^{-1/2} \Phi(A)\Phi(A\sigma
B)^{-1/2}-\Phi(A\sigma B)^{1/2}
\Phi(A)^{-1}\Phi(A\sigma B)^{1/2}\\
&\leq
(\mu^{-1}-2\sqrt{\omega})I\\
&\qquad -\left(\sqrt{\omega}\left(\Phi(A\sigma B)^{-1/2}
\Phi(A)\Phi(A\sigma B)^{-1/2}\right)^{1/2}-\left(\Phi(A\sigma
B)^{1/2}
\Phi(A)^{-1}\Phi(A\sigma B)^{1/2}\right)^{1/2}\right)^2\\
&\leq (\mu^{-1}-2\sqrt{\omega})I\,,
\end{align*}
which is a generalized operator Shisha--Mond inequality.
\end{proof}
If $\sigma$ is taken to be $\#$, we get the following Operator
Shisha--Mond inequality:
\begin{corollary} \cite[Theorem 2.1]{MNS}
Let $A, B \in \mathbb{B}(\mathscr{H})$ be positive invertible
operators such that $m^2A \leq B \leq M^2A$ for some positive real
numbers $m<M$ and $\Phi: \mathbb{B}(\mathscr{H}) \to
\mathbb{B}(\mathscr{K})$ be a positive linear map. Then
\begin{eqnarray*}
\Phi(A\sharp B)^{\frac{-1}{2}}\Phi(B)\Phi(A\sharp B)^{\frac{-1}{2}}-
\Phi(A\sharp B)^{\frac{1}{2}}\Phi(A)^{-1}\Phi(A\sharp
B)^{\frac{1}{2}} \leq (\sqrt{M}-\sqrt{m})^{2} I\,.
\end{eqnarray*}
\end{corollary}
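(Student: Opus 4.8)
The plan is to specialize Theorem \ref{main} to $\alpha=1/2$ and then transport the resulting operator inequality by a congruence with $\Phi(A\sigma B)^{-1/2}$. Setting $\alpha=1/2$ in the second inequality of \eqref{(1)}, the weighted arithmetic mean $\nabla_{1/2}$ collapses to the ordinary arithmetic mean, so after clearing the common factor $1/2$ one obtains
$$\omega\,\Phi(A)+\Phi(B)\leq \mu^{-1}\Phi(A\sigma B).$$
Writing $C=\Phi(A\sigma B)$, which is strictly positive under the standing hypotheses, I would sandwich both sides between $C^{-1/2}$ on the left and right; since congruence by a fixed operator is order preserving, this yields exactly the first displayed inequality of the proof, namely
$$C^{-1/2}\Phi(B)C^{-1/2}+\omega\,C^{-1/2}\Phi(A)C^{-1/2}\leq \mu^{-1}I.$$

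Next I would isolate the first summand and subtract $C^{1/2}\Phi(A)^{-1}C^{1/2}$ from both sides. This reduces the stated inequality to establishing that
$$\omega\,C^{-1/2}\Phi(A)C^{-1/2}+C^{1/2}\Phi(A)^{-1}C^{1/2}\geq 2\sqrt{\omega}\,I.$$
The observation that drives the whole argument is that the two operators on the left are mutually inverse: setting $P=C^{-1/2}\Phi(A)C^{-1/2}$, one has $P^{-1}=C^{1/2}\Phi(A)^{-1}C^{1/2}$, so the left-hand side is precisely $\omega P+P^{-1}$.

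Finally I would prove the scalar-type bound $\omega P+P^{-1}\geq 2\sqrt{\omega}\,I$ for the strictly positive operator $P$. Because $P^{1/2}$ and $P^{-1/2}$ are both functions of $P$ and therefore commute, completing the square gives
$$\omega P+P^{-1}-2\sqrt{\omega}\,I=\left(\sqrt{\omega}\,P^{1/2}-P^{-1/2}\right)^{2}\geq 0,$$
which is exactly the nonnegative square term exhibited in the displayed chain. Combining this with the rearranged congruence inequality produces the claim. The only delicate point is this non-commutative completing-the-square step: for arbitrary noncommuting positive operators the estimate $\omega P+Q\geq 2\sqrt{\omega}\,\sqrt{PQ}$ fails, and it is solely the inverse relationship $Q=P^{-1}$ — forcing the cross terms $P^{1/2}P^{-1/2}$ and $P^{-1/2}P^{1/2}$ to collapse to $I$ — that keeps the square manifestly positive semidefinite. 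Recognizing and exploiting that relationship is the heart of the proof; everything else is a positivity-preserving congruence together with a routine rearrangement.
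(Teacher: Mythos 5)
Your core computation is correct, and it is in fact the very argument the paper uses --- but for the \emph{theorem} preceding this corollary (the generalized Shisha--Mond inequality with bound $(\mu^{-1}-2\sqrt{\omega})I$), not for the corollary itself. The chain of congruence by $\Phi(A\sigma B)^{-1/2}$, rearrangement, and completing the square using the fact that $P=\Phi(A\sigma B)^{-1/2}\Phi(A)\Phi(A\sigma B)^{-1/2}$ and $\Phi(A\sigma B)^{1/2}\Phi(A)^{-1}\Phi(A\sigma B)^{1/2}$ are mutually inverse is exactly how the paper proves that theorem, and your justification of the square step is sound.

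The gap is that the statement you were asked to prove has different hypotheses and a different constant, and your argument never bridges either. The corollary assumes only the relative bounds $m^2A\le B\le M^2A$; no absolute bounds $b_1I\le A\le a_1I$, $b_2I\le B\le a_2I$ are given, so the constants $\mu$, $\omega$ of Theorem \ref{main} that your proof invokes are not even defined in this setting, and Theorem \ref{main} cannot be applied to the pair $(A,B)$ directly. The paper closes this gap with the technique of Subsection \ref{msm}: apply the generalized theorem with $\sigma=\#$ (so $f(t)=\sqrt{t}$) to the positive linear map $\Psi(X)=\Phi(A^{1/2}XA^{1/2})$ and the pair $(I,\,A^{-1/2}BA^{-1/2})$, which satisfies $1\cdot I\le I\le 1\cdot I$ and $m^2I\le A^{-1/2}BA^{-1/2}\le M^2I$; one then notes $\Psi(I)=\Phi(A)$, $\Psi(A^{-1/2}BA^{-1/2})=\Phi(B)$ and $\Psi\bigl(I\# (A^{-1/2}BA^{-1/2})\bigr)=\Phi(A\# B)$. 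With $a_1=b_1=1$, $b_2=m^2$, $a_2=M^2$ and $f(t)=\sqrt{t}$ one computes $\mu=1/(M+m)$ and $\nu=mM/(M+m)$, hence $\omega=\nu/\mu=mM$ and $\mu^{-1}-2\sqrt{\omega}=M+m-2\sqrt{mM}=(\sqrt{M}-\sqrt{m})^2$. Without this reduction and this evaluation of the constant, your argument proves the general theorem but never arrives at the stated inequality with the bound $(\sqrt{M}-\sqrt{m})^2I$.
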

Using the technique in Subsection \ref{msm} we can get the following
Kalmkin--McLenaghan inequality:

\begin{corollary} \cite[Theorem 2.1]{MNS}
Let $A, B \in \mathbb{B}(\mathscr{H})$ be positive invertible
operators such that $m_1^2I \leq A \leq M_1^2I$ and $m_2^2I\leq
B\leq M_2^2I$ for some positive real numbers $m_1<M_1$ and $m_2
<M_2$ and $\Phi: \mathbb{B}(\mathscr{H}) \to
\mathbb{B}(\mathscr{K})$ be a positive linear map. Then
\begin{eqnarray*}
\Phi(A\sharp B)^{\frac{-1}{2}}\Phi(B)\Phi(A\sharp B)^{\frac{-1}{2}}-
\Phi(A\sharp B)^{\frac{1}{2}}\Phi(A)^{-1}\Phi(A\sharp
B)^{\frac{1}{2}}\leq
\left(\sqrt{\frac{M_2}{m_1}}-\sqrt{\frac{m_2}{M_1}}\right)^{2} I\,.
\end{eqnarray*}
\end{corollary}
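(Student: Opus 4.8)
The plan is to derive the claim by specializing the main double inequality \eqref{(1)} to the symmetric weight $\alpha=1/2$ and then applying a congruence transformation. First I would set $\alpha=1/2$ in \eqref{(1)}, so that $\nabla_{1/2}$ is the ordinary arithmetic mean and $\alpha/\mu=1/(2\mu)$; its right-hand half then reads $\tfrac12\big(\omega\Phi(A)+\Phi(B)\big)\le\tfrac{1}{2\mu}\Phi(A\sigma B)$, which after clearing the factor $\tfrac12$ becomes
\[
\omega\,\Phi(A)+\Phi(B)\le \mu^{-1}\Phi(A\sigma B).
\]

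Next, writing $C=\Phi(A\sigma B)$ --- which is strictly positive, and hence invertible, because $A\sigma B$ is strictly positive --- I would congruence-transform this inequality by $C^{-1/2}$, multiplying on both sides, to obtain
\[
C^{-1/2}\Phi(B)C^{-1/2}+\omega\,C^{-1/2}\Phi(A)C^{-1/2}\le \mu^{-1}I,
\]
which is exactly the intermediate estimate recorded at the start of the proof. Setting $Y=C^{-1/2}\Phi(A)C^{-1/2}$ and $X=C^{-1/2}\Phi(B)C^{-1/2}$, this says $X\le \mu^{-1}I-\omega Y$. The target expression is $X-P$ with $P=C^{1/2}\Phi(A)^{-1}C^{1/2}$, and the key observation is that $P=Y^{-1}$; hence $X-P\le \mu^{-1}I-\omega Y-Y^{-1}$.

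It then remains to bound $\omega Y+Y^{-1}$ from below, i.e. to invoke the operator arithmetic--geometric mean inequality $\omega Y+Y^{-1}\ge 2\sqrt{\omega}\,I$. I would prove this by completing the square: since $Y>0$, the operator $\sqrt{\omega}\,Y^{1/2}-Y^{-1/2}$ is self-adjoint, so its square is nonnegative, and because $Y^{1/2}$ and $Y^{-1/2}$ are commuting functions of the single positive operator $Y$ the cross terms collapse, giving
\[
\big(\sqrt{\omega}\,Y^{1/2}-Y^{-1/2}\big)^2=\omega Y+Y^{-1}-2\sqrt{\omega}\,I\ge 0.
\]
Substituting this back yields $X-P\le\mu^{-1}I-2\sqrt{\omega}\,I=(\mu^{-1}-2\sqrt{\omega})I$, as required. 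The main obstacle is precisely this last step carried out at the operator level: one must verify that the completed square is legitimate and that the cross terms produce \emph{exactly} $-2\sqrt{\omega}\,I$, which hinges on the commutativity of $Y^{1/2}$ and $Y^{-1/2}$; the subsidiary point is ensuring invertibility of $\Phi(A\sigma B)$ and of $\Phi(A)$ so that the congruence and the inverse $P=Y^{-1}$ are well defined.
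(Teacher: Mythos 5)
Your argument, as far as it goes, is correct and follows exactly the route the paper itself takes for its general Shisha--Mond type theorem: take $\alpha=1/2$ in \eqref{(1)}, conjugate by $\Phi(A\sigma B)^{-1/2}$, observe that $\Phi(A\sigma B)^{1/2}\Phi(A)^{-1}\Phi(A\sigma B)^{1/2}$ is the inverse of $Y=\Phi(A\sigma B)^{-1/2}\Phi(A)\Phi(A\sigma B)^{-1/2}$, and complete the square; your $\bigl(\sqrt{\omega}\,Y^{1/2}-Y^{-1/2}\bigr)^2\ge 0$ is the same device the paper uses (written there with the two square roots $Y^{1/2}$ and $P^{1/2}=Y^{-1/2}$).

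However, there is a genuine gap: what you have proved is the \emph{general} inequality with right-hand side $(\mu^{-1}-2\sqrt{\omega})I$, which is the paper's intermediate theorem, not the corollary in question, whose right-hand side is $\bigl(\sqrt{M_2/m_1}-\sqrt{m_2/M_1}\bigr)^2 I$. The corollary is the specialization $\sigma=\#$ (so $f(t)=\sqrt{t}$ and $A\sigma B=A\# B$) under the hypotheses $m_1^2I\le A\le M_1^2I$, $m_2^2I\le B\le M_2^2I$, i.e.\ $b_1=m_1^2$, $a_1=M_1^2$, $b_2=m_2^2$, $a_2=M_2^2$, and the proof is not finished until one computes
\begin{equation*}
\mu=\frac{m_1M_1}{m_1m_2+M_1M_2},\qquad \nu=\frac{m_2M_2}{m_1m_2+M_1M_2},\qquad \omega=\frac{\nu}{\mu}=\frac{m_2M_2}{m_1M_1},
\end{equation*}
and verifies the identity
\begin{equation*}
\mu^{-1}-2\sqrt{\omega}=\frac{M_2}{m_1}+\frac{m_2}{M_1}-2\sqrt{\frac{m_2M_2}{m_1M_1}}=\left(\sqrt{\frac{M_2}{m_1}}-\sqrt{\frac{m_2}{M_1}}\right)^{2}.
\end{equation*}
You never set $\sigma=\#$, never use the specific bounds on $A$ and $B$, and never make this identification, so the stated bound is not reached; the paper closes this distance by passing through its earlier corollary (via $\frac{m_2^2}{M_1^2}A\le B\le \frac{M_2^2}{m_1^2}A$), which amounts to the same computation. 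A smaller point: your justification that $C=\Phi(A\sigma B)$ is invertible ``because $A\sigma B$ is strictly positive'' is not valid for a merely positive linear map $\Phi$ (take $\Phi(X)=V^{*}XV$ with $V$ not bounded below); one needs $\Phi$ strictly positive or $\Phi(I)$ invertible, an assumption implicit in the statement itself since it contains $\Phi(A)^{-1}$ and $\Phi(A\# B)^{-1/2}$, and one the paper also glosses over.
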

\subsection{Ozeki--Izumino--Mori--Seo type inequalities}
Now we present a generalized operator Ozeki--Izumino--Mori--Seo type
inequality. To achieve it, we need the following lemma, which is
helpful in proving many operator inequalities.

\begin{lemma}\label{l1} Let $\Phi$ be a unital positive linear map
on
 $\mathbb{B}(\mathscr{H}),$ $A\in\mathbb{B}(\mathscr{H})$
 is selfadjoint with $bI\leq A\leq aI$. Then
 $$\Phi(A^{2})-\Phi(A)^{2} \leq \frac{1}{4} (a-b)^{2}I.$$
\end{lemma}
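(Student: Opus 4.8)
The plan is to reduce this operator statement to an exact operator identity by two elementary moves: first linearising $A^{2}$ using the spectral bounds on $A$, and then completing the square for the single operator $\Phi(A)$. This is the operator analogue of the classical scalar variance bound.

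First I would exploit the hypothesis $bI\le A\le aI$. Since $aI-A$ and $A-bI$ are both positive and commute (each being a polynomial in the selfadjoint operator $A$), their product is again positive, so $(aI-A)(A-bI)\ge 0$, that is,
\begin{equation*}
A^{2}\le (a+b)A-ab\,I .
\end{equation*}
Applying the unital positive linear map $\Phi$ and using linearity together with $\Phi(I)=I$, I obtain
\begin{equation*}
\Phi(A^{2})\le (a+b)\Phi(A)-ab\,I .
\end{equation*}

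Next I would subtract $\Phi(A)^{2}$ from both sides and rewrite the right-hand side by completing the square. Writing $T=\Phi(A)$, a direct computation yields the operator identity
\begin{equation*}
(a+b)T-ab\,I-T^{2}=\tfrac{1}{4}(a-b)^{2}I-\left(T-\tfrac{a+b}{2}I\right)^{2}.
\end{equation*}
Because the squared selfadjoint term is positive, the right-hand side is dominated by $\tfrac{1}{4}(a-b)^{2}I$, and combining this with the previous display gives
\begin{equation*}
\Phi(A^{2})-\Phi(A)^{2}\le \tfrac{1}{4}(a-b)^{2}I ,
\end{equation*}
as desired.

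I expect no serious obstacle here; the only points that require care are the justification that $(aI-A)(A-bI)\ge 0$, which rests on the commutativity of the two positive factors so that their product is positive, and the verification that the constant term produced by completing the square vanishes, i.e. that $\tfrac{(a+b)^{2}}{4}-ab-\tfrac{(a-b)^{2}}{4}=0$. Both are routine. I would also note that the companion inequality $\Phi(A)^{2}\le \Phi(A^{2})$ (positivity of the operator variance) is not needed for the upper bound established here.
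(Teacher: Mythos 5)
Your proof is correct and is essentially the paper's argument: both rest on the factorization $(aI-A)(A-bI)\geq 0$ (equivalently, the quadratic bound centered at $\frac{a+b}{2}$) together with the positivity and unitality of $\Phi$, and both conclude by discarding the positive square $\left(\Phi(A)-\frac{a+b}{2}I\right)^{2}$. The only difference is bookkeeping: the paper completes the square inside $\Phi$ and transfers it via the translation-invariance identity $\Phi(A^{2})-\Phi(A)^{2}=\Phi(|A-\alpha I|^{2})-|\Phi(A-\alpha I)|^{2}$, whereas you apply $\Phi$ to the linearized inequality $A^{2}\leq (a+b)A-ab\,I$ first and complete the square in $\Phi(A)$ afterwards.
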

\begin{proof} The proof is an easy consequence of both facts
$$\Phi(A^{2})
 -\Phi(A)^{2}=\Phi(|A-\alpha I|^{2})-|\Phi(A-\alpha I)|^{2}\,\,\,\,\,\,(\alpha\in \mathbb{R})$$ and
 $$\frac{1}{4} (a-b)^{2}I-\left|A-\frac{a+b}{2}I \right|^{2}=(aI-A)(A-bI)\geq 0.$$
\end{proof}
\begin{theorem} Suppose that $\Phi:\mathbb{B}(\mathscr{H}) \rightarrow \mathbb{B}(\mathscr{K})$
 is a strictly positive linear map with $\Phi(I)\leq I.$
 Assume that $A,B\in\mathbb{B}(\mathscr{H})$ are such that $0<b_{1}I\leq A\leq a_{1}I$ and
 $0<b_{2}I\leq B\leq a_{2}I.$ Then
\begin{align*}
\Phi(A)^{1/2}\Phi(|A^{-1/2}(A\sigma
B)|^{2})\Phi(A)^{1/2}&-\left|\Phi(A)^{-1/2}\Phi(A\sigma
B)\Phi(A)^{1/2}\right|^{2}\\
&\leq\frac{a_{1}^{2}}{4} (f(a_{2}b_{1}^{-1})-f(b_{2}a_{1}^{-1})
)^{2}I\,.
\end{align*}
\end{theorem}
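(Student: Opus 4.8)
The plan is to recognize the left-hand side as a ``variance'' expression $\Psi(T^2)-\Psi(T)^2$ for a suitable unital positive linear map $\Psi$, and then to invoke Lemma \ref{l1}. First I would introduce
$$\Psi(X)=\Phi(A)^{-1/2}\Phi\!\left(A^{1/2}XA^{1/2}\right)\Phi(A)^{-1/2},$$
which is legitimate since $\Phi$ strictly positive and $A\geq b_1 I>0$ make $\Phi(A)$ strictly positive, hence invertible. The map $\Psi$ is positive (both conjugation and $\Phi$ preserve positivity) and, decisively, unital: $\Psi(I)=\Phi(A)^{-1/2}\Phi(A)\Phi(A)^{-1/2}=I$. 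Since the proof of Lemma \ref{l1} uses only unitality and positivity, it applies verbatim to $\Psi$.

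Next I would put $T=f(A^{-1/2}BA^{-1/2})$, so that $A\sigma B=A^{1/2}TA^{1/2}$ and, by a direct computation, $A^{1/2}T^2A^{1/2}=(A\sigma B)A^{-1}(A\sigma B)=\left|A^{-1/2}(A\sigma B)\right|^{2}$. Consequently
$$\Psi(T)=\Phi(A)^{-1/2}\Phi(A\sigma B)\Phi(A)^{-1/2},\qquad \Psi(T^2)=\Phi(A)^{-1/2}\Phi\!\left(\left|A^{-1/2}(A\sigma B)\right|^{2}\right)\Phi(A)^{-1/2}.$$
To locate the spectrum of $T$, I would reuse the estimate from the proof of Theorem \ref{main}, namely $b_2a_1^{-1}I\leq A^{-1/2}BA^{-1/2}\leq a_2b_1^{-1}I$; applying the operator monotone (increasing) function $f$ yields $f(b_2a_1^{-1})I\leq T\leq f(a_2b_1^{-1})I$. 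Lemma \ref{l1} then delivers
$$\Psi(T^2)-\Psi(T)^2\leq \tfrac14\left(f(a_2b_1^{-1})-f(b_2a_1^{-1})\right)^{2}I.$$

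The remaining step, and the one I expect to require the most care, is the bookkeeping that converts this into the asserted estimate. Setting $M=\Phi\!\left(\left|A^{-1/2}(A\sigma B)\right|^{2}\right)-\Phi(A\sigma B)\Phi(A)^{-1}\Phi(A\sigma B)$, one verifies that $\Psi(T^2)-\Psi(T)^2=\Phi(A)^{-1/2}M\Phi(A)^{-1/2}$, whereas the left-hand side of the theorem is exactly $\Phi(A)^{1/2}M\Phi(A)^{1/2}$; the two differ by conjugation by $\Phi(A)$. Because conjugation by the selfadjoint $\Phi(A)$ preserves operator order, multiplying the last displayed inequality on both sides by $\Phi(A)$ gives $\Phi(A)^{1/2}M\Phi(A)^{1/2}\leq \tfrac14\left(f(a_2b_1^{-1})-f(b_2a_1^{-1})\right)^2\,\Phi(A)^2$. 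Finally, $A\leq a_1 I$ together with $\Phi(I)\leq I$ yields $0<\Phi(A)\leq a_1 I$ and hence $\Phi(A)^2\leq a_1^2 I$, which replaces the operator bound by the scalar bound $\frac{a_1^2}{4}\left(f(a_2b_1^{-1})-f(b_2a_1^{-1})\right)^2 I$ and completes the argument.
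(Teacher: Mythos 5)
Your proposal is correct and follows essentially the same route as the paper: the same unital map $\Psi(X)=\Phi(A)^{-1/2}\Phi(A^{1/2}XA^{1/2})\Phi(A)^{-1/2}$, the same substitution $T=f(A^{-1/2}BA^{-1/2})$ with the spectral bounds $f(b_2a_1^{-1})I\leq T\leq f(a_2b_1^{-1})I$, the same application of Lemma \ref{l1}, and the same final conjugation by $\Phi(A)$ together with $\Phi(A)^2\leq a_1^2I$ from $\Phi(I)\leq I$. Your treatment is in fact slightly more explicit than the paper's in identifying both sides as conjugates of the same operator $M$, but the argument is identical in substance.
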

\begin{proof}
 Consider the strictly positive linear map $\Psi :\mathbb{B}(\mathscr{H})\rightarrow \mathbb{B}(\mathscr{K}) $ defined by
 $$\Psi(C)=\Phi(A)^{-1/2}\Phi(A^{1/2}CA^{1/2})\Phi(A)^{-1/2}.$$
Clearly $\Psi(I)=I$. Utilizing Lemma \ref{l1}, we obtain
 \begin{equation}\label{(7)}
 \Psi(C^{2})-\Psi(C)^{2}\leq\frac{1}{4} (a-b)^{2}I
 \end{equation}
 for all $C$ with $0<bI\leq C\leq aI.$ Put $C:=f(A^{-1/2}BA^{-1/2}).$ The given conditions on $A,B$ imply that
 $$
 b_2a_1^{-1}I\leq A^{-1/2}BA^{-1/2}\leq a_2b_1^{-1}I.
 $$
 The operator monotonicity of $f$ then yields
 $$
 0< bI=f(b_2a_1^{-1})I\leq C= f(A^{-1/2}BA^{-1/2})\leq f(a_2b_ 1^{-1}) I = aI.
 $$
 Thus inequality \eqref{(7)} gives rise to
 \begin{align*}
\Phi(A)^{-1/2}\Phi(|A^{-1/2}(A\sigma
B)|^{2})\Phi(A)^{-1/2}&-\left(\Phi(A)^{-1/2}\Phi(A\sigma
 B)\Phi(A)^{-1/2}\right)^{2}\\
&\leq\frac{1}{4} (f(a_{2}b_{1}^{-1})-f(b_{2}a_{1}^{-1}) )^{2}I.
\end{align*}
 Now on pre and post multiplying by $\Phi(A),$ we obtain
\begin{align*}
\Phi(A)^{1/2}\Phi(|A^{-1/2}(A\sigma
B)|^{2})\Phi(A)^{1/2}&-\left|\Phi(A)^{-1/2}\Phi(A\sigma
B)\Phi(A)^{1/2}\right|^{2}\\
&\leq \frac{a_{1}^{2}}{4} (f(a_{2}b_{1}^{-1})-f(b_{2}a_{1}^{-1})
 )^{2}I\,.
 \end{align*}
\end{proof}
\begin{corollary}\label{c1} Let
 $A,B\in \mathbb{B}(\mathscr{H})$
 be such that $0< b_{1}I\leq A \leq a_{1}I$ and $0< b_{2}I\leq B \leq a_{2}I$, respectively. Then for any unit vector
 $x\in {\mathscr H}$,
 $$\langle Ax, x \rangle \langle |A^{-1/2}(A\sigma B)|^{2}x, x \rangle-\langle A
 \sigma B x, x \rangle^{2}\leq \frac{a_{1}^{2}}{4}
 (f(a_{2}b_{1}^{-1})-f(b_{2}a_{1}^{-1}
 ))^{2}.$$
 \end{corollary}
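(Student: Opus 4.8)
The plan is to derive the scalar (vector-state) inequality directly from the operator inequality just established in the preceding theorem, by specializing the positive linear map $\Phi$ to a vector state. First I would fix a unit vector $x\in\mathscr{H}$ and define $\Phi:\mathbb{B}(\mathscr{H})\to\mathbb{B}(\mathbb{C})=\mathbb{C}$ by $\Phi(C)=\langle Cx,x\rangle$. This $\Phi$ is a positive linear map, and since $x$ is a unit vector we have $\Phi(I)=\langle x,x\rangle=1$, so the hypotheses $\Phi(I)\le I$ (here simply $1\le 1$) and strict positivity on strictly positive operators are met; thus all conditions of the theorem are satisfied.

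The core step is to substitute this $\Phi$ into the operator inequality
$$\Phi(A)^{1/2}\Phi(|A^{-1/2}(A\sigma B)|^{2})\Phi(A)^{1/2}-\left|\Phi(A)^{-1/2}\Phi(A\sigma B)\Phi(A)^{1/2}\right|^{2}\le\frac{a_{1}^{2}}{4}(f(a_{2}b_{1}^{-1})-f(b_{2}a_{1}^{-1}))^{2}I$$
and read off what each term becomes. Since $\Phi$ now takes scalar values, $\Phi(A)=\langle Ax,x\rangle$ is a positive real number and the factors $\Phi(A)^{1/2}$, $\Phi(A)^{-1/2}$ commute with everything and cancel where they appear as conjugating factors; the first term collapses to $\langle Ax,x\rangle\langle|A^{-1/2}(A\sigma B)|^{2}x,x\rangle$, while in the second term the $\Phi(A)^{-1/2}$ and $\Phi(A)^{1/2}$ multiply to $1$, leaving $|\Phi(A\sigma B)|^{2}=\langle A\sigma Bx,x\rangle^{2}$ (the operator $A\sigma B$ being positive, its vector state is a nonnegative real, so the modulus is redundant). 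The scalar identity $I\mapsto 1$ turns the right-hand side into $\frac{a_{1}^{2}}{4}(f(a_{2}b_{1}^{-1})-f(b_{2}a_{1}^{-1}))^{2}$, which is exactly the claimed bound.

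The only genuine point requiring care—and hence the main obstacle—is verifying that the conjugation factors truly cancel in the scalar setting so that the left-hand side simplifies as claimed, rather than producing some residual cross terms; this is immediate precisely because scalars commute, but it is worth stating explicitly. Once that bookkeeping is confirmed, the corollary follows by merely rewriting the operator inequality with the vector-state evaluation and noting that all operator-valued expressions have become the stated inner products.
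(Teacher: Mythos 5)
Your proposal is correct and matches the paper's proof exactly: the paper likewise takes the unital positive linear map $\Phi(C)=\langle Cx,x\rangle$ and applies the preceding theorem. Your additional bookkeeping about scalar cancellation and the redundancy of the modulus (since $A\sigma B\geq 0$) is sound, just more explicit than the paper's one-line argument.
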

 \begin{proof} The corollary follows on considering the unital positive linear map $\Phi $ on $\mathbb{B}(\mathscr{H})$ given by $\Phi (C)= ~<Cx,x>.$
 \end{proof}
\begin{corollary}\cite[Theorem 4.5]{IMS} Let
 $A,B\in \mathbb{B}(\mathscr{H})$
 be such that $0< b_{1}I\leq A \leq a_{1}I$ and
 $0< b_{2}I\leq B \leq a_{2}I$, respectively. Then for any unit vector
 $x\in {\mathscr H}$,
 $$\langle Ax, x \rangle \langle Bx, x \rangle-\langle A
 \# B x, x \rangle^{2}\leq\left( \frac{\sqrt{a_{1}a_{2}} -\sqrt{b_{1}b_{2}}}{2}\right)^{2} \min \{a_{1}b_{1}^{-1},a_{2}b_{2}^{-1}\}\,.$$
 \end{corollary}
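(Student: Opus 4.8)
The plan is to specialize the preceding Corollary \ref{c1} to the geometric mean $\sigma=\#$, whose representing function is $f(t)=\sqrt{t}$, and then to exploit the symmetry $A\#B=B\#A$ in order to sharpen the constant into a minimum.

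First I would identify the operator $|A^{-1/2}(A\#B)|^{2}$ that appears on the left-hand side of Corollary \ref{c1}. Using $A\#B=A^{1/2}(A^{-1/2}BA^{-1/2})^{1/2}A^{1/2}$, one has $A^{-1/2}(A\#B)=(A^{-1/2}BA^{-1/2})^{1/2}A^{1/2}$, whence
$$|A^{-1/2}(A\#B)|^{2}=A^{1/2}(A^{-1/2}BA^{-1/2})A^{1/2}=B.$$
Therefore, for the choice $\sigma=\#$, the left-hand side of Corollary \ref{c1} becomes precisely $\langle Ax,x\rangle\langle Bx,x\rangle-\langle A\#B\,x,x\rangle^{2}$, the quantity we wish to bound.

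Next I would simplify the right-hand side. With $f(t)=\sqrt{t}$ the bound in Corollary \ref{c1} reads $\frac{a_{1}^{2}}{4}(\sqrt{a_{2}b_{1}^{-1}}-\sqrt{b_{2}a_{1}^{-1}})^{2}$. A short calculation, factoring $\sqrt{a_{1}a_{2}}-\sqrt{b_{1}b_{2}}$ out of the difference of square roots, rewrites this as $\left(\frac{\sqrt{a_{1}a_{2}}-\sqrt{b_{1}b_{2}}}{2}\right)^{2}a_{1}b_{1}^{-1}$. This already yields the claimed inequality, but with the factor $a_{1}b_{1}^{-1}$ in place of the minimum.

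Finally, to recover the minimum I would invoke the symmetry of the geometric mean, $A\#B=B\#A$, together with the symmetric roles of $A$ and $B$ in the hypotheses. Applying Corollary \ref{c1} with the roles of $A$ and $B$ (and correspondingly of the pairs $(b_{1},a_{1})$ and $(b_{2},a_{2})$) interchanged gives the same left-hand side but the bound $\frac{a_{2}^{2}}{4}(\sqrt{a_{1}b_{2}^{-1}}-\sqrt{b_{1}a_{2}^{-1}})^{2}=\left(\frac{\sqrt{a_{1}a_{2}}-\sqrt{b_{1}b_{2}}}{2}\right)^{2}a_{2}b_{2}^{-1}$. Since both bounds hold simultaneously, I may retain the smaller one, which produces the factor $\min\{a_{1}b_{1}^{-1},a_{2}b_{2}^{-1}\}$ and completes the argument. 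The only genuinely non-routine point is the identity $|A^{-1/2}(A\#B)|^{2}=B$; once that is established, the remaining steps are elementary algebraic simplification and the symmetry observation.
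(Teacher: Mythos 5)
Your proposal is correct and follows exactly the paper's route: the paper's proof is the one-line instruction ``replace $\sigma$ by $\#$ in Corollary \ref{c1},'' and you have simply supplied the details it leaves implicit --- the identity $|A^{-1/2}(A\# B)|^{2}=B$, the algebraic simplification $\frac{a_{1}^{2}}{4}\bigl(\sqrt{a_{2}b_{1}^{-1}}-\sqrt{b_{2}a_{1}^{-1}}\bigr)^{2}=\bigl(\frac{\sqrt{a_{1}a_{2}}-\sqrt{b_{1}b_{2}}}{2}\bigr)^{2}a_{1}b_{1}^{-1}$, and the symmetry $A\# B=B\# A$ that upgrades the bound to the minimum. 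All three steps check out.
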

 \begin{proof} Replacing $\sigma$ by $\#$ in Corollary \ref{c1}, we get the desired result.
 \end{proof}

\subsection{Greub--Rheinboldt type inequality}

Greub--Rheinboldt \cite{GR} showed that if $A \in
\mathbb{B}(\mathscr{H})$ be such that and $0<mI\leq A\leq MI$, then
\begin{eqnarray*}
\langle Ax, x\rangle \langle A^{-1}x, x\rangle  \leq
\frac{(M+m)^2}{4mM}\quad (x\in \mathscr{H}, \|x\|=1)\,.
\end{eqnarray*}
The first consequence of our main result is a generalized
Greub-Rheinboldt inequality.

\begin{corollary} If $A \in
\mathbb{B}(\mathscr{H})$ be such that $0<mI\leq A\leq MI$, then for any
$0<\alpha<1$, any operator mean $\sigma$ and any positive linear map
$\Phi$ the following
Greub--Rheinboldt type inequality is valid:\\
$$\Phi(A)\#_{\alpha}\Phi(A^{-1})\leq \frac{\alpha}{\mu\omega^{1-\alpha}}\Phi(A\sigma A^{-1})\,,$$
where $\mu$ and $\omega$ are determined as in Theorem \ref{main}
with $a_1=M, b_1=m, a_2=m^{-1}$ and $b_2=M^{-1}$.
\end{corollary}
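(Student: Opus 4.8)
The plan is to obtain the claim as an immediate specialization of Theorem \ref{main}, taking $B=A^{-1}$. First I would record that the hypothesis $0<mI\le A\le MI$ forces $0<M^{-1}I\le A^{-1}\le m^{-1}I$: this is the order-reversing behaviour of the map $X\mapsto X^{-1}$ on strictly positive operators (equivalently, $-t^{-1}$ is operator monotone on $(0,\infty)$). Hence the pair $(A,A^{-1})$ satisfies the spectral bounds required in Theorem \ref{main} under the identification $a_1=M$, $b_1=m$, $a_2=m^{-1}$, $b_2=M^{-1}$, exactly as stipulated in the statement, and the scalars $\mu,\nu,\omega$ are then the ones computed from these values.

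With these assignments in hand I would simply invoke the double inequality \eqref{(1)} for the pair $(A,A^{-1})$ and read off its two outer members, which gives
$$\omega^{1-\alpha}\bigl(\Phi(A)\#_{\alpha}\Phi(A^{-1})\bigr)\le \frac{\alpha}{\mu}\,\Phi(A\sigma A^{-1})\,.$$
It then remains only to divide through by $\omega^{1-\alpha}$. This is legitimate because the proof of Theorem \ref{main} already established $\mu>0$ and $\nu>0$, whence $\omega=\alpha\nu/((1-\alpha)\mu)>0$ for $\alpha\in(0,1)$, so that $\omega^{1-\alpha}$ is a strictly positive scalar. Dividing yields
$$\Phi(A)\#_{\alpha}\Phi(A^{-1})\le \frac{\alpha}{\mu\,\omega^{1-\alpha}}\,\Phi(A\sigma A^{-1})\,,$$
which is precisely the asserted Greub--Rheinboldt type inequality.

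There is no genuine obstacle here, since the entire analytic content is carried by Theorem \ref{main}; the only points needing care are the correct reversal of the operator bounds under inversion and the positivity of $\omega^{1-\alpha}$ that licenses the final division. As a consistency check I would verify that the choice $\alpha=1/2$, $\sigma=\#$ recovers the classical constant: a short computation with $f(t)=\sqrt{t}$ gives $\mu=Mm/(M+m)$ and $\omega=1/(Mm)$, and since $A\#A^{-1}=I$, applying the vector functional $\Phi(C)=\langle Cx,x\rangle$ and squaring reproduces the familiar Greub--Rheinboldt bound $(M+m)^2/(4mM)$.
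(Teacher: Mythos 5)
Your proof is correct and takes exactly the paper's approach: the paper's own proof is the one-line observation that inequality \eqref{(1)} applied with $B=A^{-1}$ (so that $a_1=M$, $b_1=m$, $a_2=m^{-1}$, $b_2=M^{-1}$) gives the claim after dividing by $\omega^{1-\alpha}>0$. Your added details---the order reversal of the bounds under inversion, the positivity of $\omega$ inherited from $\mu,\nu>0$, and the check that $\alpha=1/2$, $\sigma=\#$ recovers the classical constant $(M+m)^2/(4mM)$---are sound elaborations of that same argument.
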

\begin{proof} Using \eqref{(1)} with $B=A^{-1}$ we get the desired inequality.
\end{proof}

\section{Inequalities involving Hadamard product and operator means}
In this section we present several inequalities involving Hadamard
product and operator means.

If $U$ is the isometry of $H$ into $H\otimes H$ given by
$Ue_{n}=e_{n}\otimes e_{n}$, where $\{e_{n}\}$ is a fixed
orthonormal basis of $H,$ then the Hadamard product $A\circ B$ of
(bounded) operators $A$ and $B$ on $H$ for $\{e_{n}\}$ is expressed
by
$$ A\circ B=U^{*}(A\otimes B) U.$$
A real valued continuous function $f$ is called supermultiplicative
(submultiplicative, resp.) if $f(xy) \geq f(x)f(y)$ ($f(xy) \leq
f(x)f(y)$, resp.).
\begin{theorem} Let $A,B,C, D\in
\mathbb{B}(\mathscr{H})$ be such that $b_{1}I \leq A \leq a_{1}I$,
$b_{2}I \leq B \leq a_{2}I$, $b_{3}I \leq C \leq a_{3}I$ and $b_{4}I
\leq D \leq a_{4}I$ for some scalars $0<b_{i}< a_{i},\;\; i=1,
\cdots, 4.$ Then for any operator mean $\sigma$, whose representing
function $f$ is submultiplicative, the following generalized
inequalities hold for $\alpha \in (0,1)$:
\begin{itemize}
\item[(i)] $\big(\omega(A\circ B)\big) \nabla_{\alpha} (C\circ D)\leq \frac{\alpha}{\mu}((A \sigma C)\circ(B\sigma D))$.\\

\item[(ii)] $\omega^{1-\alpha}\big((A\circ B)\#_{\alpha}(C\circ D)\big)\leq \frac{\alpha}{\mu}((A\sigma C)\circ(B\sigma D))$.\\

\item[(iii)]$(A\circ B) \#_{\alpha} (C\circ D)-((A\sigma
C)\circ(B\sigma D))
\leq (\frac{\alpha}{\mu} \omega^{\alpha-1}-1) a_{1}a_{2}f(a_{3}a_{4}b_{1}^{-1}b_{2}^{-1})I$.\\
In particular,
\begin{align*}
(A\circ B) \# (C\circ D)&-((A\# C)\circ(B\# D))\\
&\leq (\frac{1}{2\mu \sqrt{\omega}}-1)
\min\left\{a_{1}a_{2}f(a_{3}a_{4}b_{1}^{-1}b_{2}^{-1}),a_{3}a_{4}(a_{1}^{-1})a_{2}^{-1}b_{3}b_{4}^{1/2}\right\}I\,.
\end{align*}
\item[(iv)] $(A\circ B) \#_{\alpha} (C\circ D)-((A\#_{\alpha}
C)\circ(B\#_{\alpha} D))
\leq \left((1-\alpha)(\mu
\alpha^{-1})^{\frac{\alpha}{\alpha-1}}-\nu\right) a_{1}a_{2}I$.\\
In particular,\\
$(A\circ B) \# (C\circ D)-((A\# C)\circ(B\# D))
\leq (\frac{1}{4\mu}-\nu ) \min\left\{a_{1}a_{2},a_{3}a_{4}\right\}I$.\\
\item[(v)]\begin{align*}((A\sigma C)&\circ(B\sigma D))^{-1/2}(C\circ D)((A\sigma C)\circ(B\sigma D))^{-1/2}\\
&-((A\sigma C)\circ(B\sigma D))^{1/2}(A\circ B)^{-1}((A\sigma
C)\circ(B\sigma D))^{1/2}\leq (\mu^{-1}-2\sqrt{\omega})I\,,
\end{align*}
\end{itemize}
where
$\omega=\frac{\alpha(b_{1}b_{2}b_{3}b_{4}f(a_{3}a_{4}b_{1}^{-1}b_{2}^{-1})-a_{1}a_{2}a_{3}a_{4}f(b_{3}b_{4}a_{1}^{-1}a_{2}^{-1}))}
{(1-\alpha)a_{1}a_{2}b_{1}b_{2}(f(b_{3}b_{4}a_{1}^{-1}a_{2}^{-1})-f(a_{3}a_{4}b_{1}^{-1}b_{2}^{-1}))},
\mu=\frac{a_{1}a_{2}b_{1}b_{2}(f(b_{3}b_{4}a_{1}^{-1}a_{2}^{-1})-f(a_{3}a_{4}b_{1}^{-1}b_{2}^{-1}))}{b_{1}b_{2}b_{3}b_{4}-a_{1}a_{2}a_{3}a_{4}}$
and $\nu =\frac{(1-\alpha)\omega\mu}{\alpha}.$
\end{theorem}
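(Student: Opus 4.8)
The plan is to realize the Hadamard product as a single positive linear map and then feed tensor products into the one-variable results already established. Concretely, let $U$ be the isometry with $Ue_n=e_n\otimes e_n$ and define $\Phi(X)=U^*XU$ on $\mathbb{B}(\mathscr{H}\otimes\mathscr{H})$; this is positive and, since $U^*U=I$, unital. Put $\widetilde A=A\otimes B$ and $\widetilde B=C\otimes D$, so that $\Phi(\widetilde A)=A\circ B$ and $\Phi(\widetilde B)=C\circ D$. Writing $a_1a_2(I\otimes I)-A\otimes B=(a_1I-A)\otimes B+a_1I\otimes(a_2I-B)\geq0$ and the three analogous identities, one gets $b_1b_2(I\otimes I)\leq\widetilde A\leq a_1a_2(I\otimes I)$ and $b_3b_4(I\otimes I)\leq\widetilde B\leq a_3a_4(I\otimes I)$. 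Substituting these four scalars for $b_1,a_1,b_2,a_2$ turns the quantities $\mu,\nu,\omega$ of Theorem \ref{main} into exactly the $\mu,\nu,\omega$ displayed in the present statement, so Theorem \ref{main} and its corollaries apply verbatim to the triple $(\widetilde A,\widetilde B,\Phi)$.

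The one genuinely new ingredient, and the step I expect to be the crux, is a tensorial comparison driven by submultiplicativity of $f$:
$$(A\otimes B)\,\sigma\,(C\otimes D)\leq(A\,\sigma\,C)\otimes(B\,\sigma\,D).$$
To prove it, observe that $(A\otimes B)^{-1/2}(C\otimes D)(A\otimes B)^{-1/2}=P\otimes Q$ with $P=A^{-1/2}CA^{-1/2}$ and $Q=B^{-1/2}DB^{-1/2}$, so the left side equals $(A^{1/2}\otimes B^{1/2})f(P\otimes Q)(A^{1/2}\otimes B^{1/2})$ and the right side equals $(A^{1/2}\otimes B^{1/2})(f(P)\otimes f(Q))(A^{1/2}\otimes B^{1/2})$. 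Since $P\otimes I$ and $I\otimes Q$ commute with joint spectrum $\sigma(P)\times\sigma(Q)$, the joint spectral theorem reduces the claim to the pointwise inequality $f(\lambda\mu)\leq f(\lambda)f(\mu)$, which is precisely submultiplicativity. Applying the positive map $\Phi$ then gives $\Phi(\widetilde A\,\sigma\,\widetilde B)\leq(A\sigma C)\circ(B\sigma D)$; set $Y=(A\sigma C)\circ(B\sigma D)$.

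With this in hand the first four parts follow. Feeding $(\widetilde A,\widetilde B,\Phi)$ into the two inequalities of \eqref{(1)} and then enlarging $\tfrac{\alpha}{\mu}\Phi(\widetilde A\sigma\widetilde B)$ to $\tfrac{\alpha}{\mu}Y$ yields (i) and (ii). For (iii) I rewrite (ii) as $(A\circ B)\#_\alpha(C\circ D)\leq\tfrac{\alpha}{\mu}\omega^{\alpha-1}\Phi(\widetilde A\sigma\widetilde B)$; subtracting $Y$ and invoking $\Phi(\widetilde A\sigma\widetilde B)\leq Y$ bounds the difference by $(\tfrac{\alpha}{\mu}\omega^{\alpha-1}-1)\Phi(\widetilde A\sigma\widetilde B)$, after which the operator-monotone estimate $\widetilde A\sigma\widetilde B\leq f(a_3a_4b_1^{-1}b_2^{-1})\widetilde A\leq a_1a_2f(a_3a_4b_1^{-1}b_2^{-1})(I\otimes I)$ together with unitality of $\Phi$ produces the scalar bound (this last replacement needs the multiplier $\tfrac{\alpha}{\mu}\omega^{\alpha-1}-1\geq0$, which is what makes the estimate a genuine reverse). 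For (iv) I instead apply the reverse Ando inequality \eqref{g2} to $(\widetilde A,\widetilde B,\Phi)$; since $f(t)=t^\alpha$ is multiplicative the comparison of the previous paragraph is an equality, whence $\Phi(\widetilde A\#_\alpha\widetilde B)=(A\#_\alpha C)\circ(B\#_\alpha D)$ exactly, and $\Phi(\widetilde A)\leq a_1a_2I$ finishes it. In both (iii) and (iv) the $\alpha=1/2$ specializations follow by the symmetry of $\#$ in $(\widetilde A,\widetilde B)$, which is exactly why the $\min\{\,\cdot\,,\cdot\,\}$ appears.

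Finally, (v) I derive directly from part (i) rather than from the tensor lemma. Taking $\alpha=1/2$ in (i) gives $\omega(A\circ B)+(C\circ D)\leq\mu^{-1}Y$; conjugating by $Y^{-1/2}$ (legitimate because $A\sigma C\geq b_1f(b_3a_1^{-1})I>0$ and likewise for $B\sigma D$, so $Y$ is invertible) yields $\omega T+Y^{-1/2}(C\circ D)Y^{-1/2}\leq\mu^{-1}I$, where $T:=Y^{-1/2}(A\circ B)Y^{-1/2}>0$ and $Y^{1/2}(A\circ B)^{-1}Y^{1/2}=T^{-1}$. Then
$$Y^{-1/2}(C\circ D)Y^{-1/2}-Y^{1/2}(A\circ B)^{-1}Y^{1/2}\leq\mu^{-1}I-\bigl(\omega T+T^{-1}\bigr)\leq(\mu^{-1}-2\sqrt{\omega})I,$$
the last step using the operator arithmetic--geometric inequality $\omega T+T^{-1}-2\sqrt{\omega}I=(\sqrt{\omega}\,T^{1/2}-T^{-1/2})^2\geq0$. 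The hard part throughout is really the single submultiplicativity lemma; once the tensorial comparison is secured, each item is a transcription of an already-proved one-map inequality, with only the bookkeeping of the constants $\mu,\nu,\omega$ and the signs of the scalar multipliers left to verify.
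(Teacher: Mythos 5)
Your proposal follows exactly the paper's route: realize the Hadamard product as $\Phi(X)=U^{*}XU$ for the isometry $U$, derive the key tensor comparison $(A\otimes B)\sigma(C\otimes D)\leq(A\sigma C)\otimes(B\sigma D)$ from submultiplicativity of $f$ (this is the paper's inequality \eqref{(8)}), and then apply Theorem \ref{main} and its corollaries to $A\otimes B$ and $C\otimes D$ with the bounds $b_{1}b_{2}I\leq A\otimes B\leq a_{1}a_{2}I$ and $b_{3}b_{4}I\leq C\otimes D\leq a_{3}a_{4}I$, which indeed produce the stated $\mu,\nu,\omega$. The paper writes out only (i) and (ii) and settles (iii)--(v) with ``can be proved similarly''; your completion of those parts (using \eqref{g2} for (iv), rerunning the Shisha--Mond argument with $Y=(A\sigma C)\circ(B\sigma D)$ for (v), and explicitly flagging the nonnegativity of the multiplier needed in (iii)) is the natural and correct filling-in of that sketch.
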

\begin{proof} We have
\begin{align*}
(A\otimes B)^{1/2}(f(X)\otimes f(Y))(A\otimes B)^{1/2}\geq (A\otimes
B)^{1/2}f(X\otimes Y)(A\otimes B)^{1/2}\,.
\end{align*}
Taking $X=A^{-1/2}CA^{-1/2}$ and $Y=B^{-1/2}DB^{-1/2},$ we obtain
\begin{equation}\label{(8)}
(A\sigma C)\otimes(B\sigma D)\geq (A\otimes B)\sigma(C\otimes D).
\end{equation}
Also, simple arguments leads
to $a_{1}^{-1}a_{2}^{-1}b_{3}b_{4}A\otimes B \leq C\otimes D\leq b_{1}^{-1}b_{2}^{-1}a_{3}a_{4}A\otimes B,$ so on replacing $A$ by $A\otimes B$ and $B$ by $C\otimes D$
in Theorem
\ref{main} and taking $\Phi(Y)=U^*YU$, where $U$ an isometry
satisfying $U^{*} (A\otimes B)U=A\circ B$, and using \eqref{(8)} we
get (i) and (ii). The rest of the inequalities can be proved similarly.
\end{proof}

\begin{theorem} Let $A,B,C, D, \sigma$ and $f$ as in Theorem 4.1.Then, the following generalized inequalities
hold:
\begin{itemize}
\item[(i)] $ (A\circ B)\sigma(C\circ D)\leq\frac{1}{\omega}((A\sigma C)\circ(B\sigma D))$,\\

\item[(ii)] $((A\circ B)\sigma(B\circ D))-((A\sigma C)\circ(B\sigma D))\leq -g(t_{0})(A\circ B)\,,$
\end{itemize}
where $\omega=\frac{a_{1}a_{2}b_{1}b_{2}(f(a_{3}a_{4}b_{1}^{-1}b_{2}^{-1})-f(a_{1}^{-1}a_{2}^{-1}b_{3}b_{4}))}{(a_{1}a_{2}a_{3}a_{4}-b_{1}b_{2}b_{3}b_{4})f'(c)}$ $\mu= \frac{a_{1}a_{2}b_{1}b_{2}(f(a_{3}a_{4}b_{1}^{-1}b_{2}^{-1})-f(a_{1}^{-1}a_{2}^{-1}b_{3}b_{4}))}{a_{1}a_{2}a_{3}a_{4}-b_{1}b_{2}b_{3}b_{4}}$,\\ $\nu =\frac{a_{1}a_{2}a_{3}a_{4}(f(a_{1}^{-1}a_{2}^{-1}b_{3}b_{4})-b_{1}b_{2}b_{3}b_{4}f(b_{1}^{-1}b_{2}^{-1}a_{3}a_{4}))}{a_{1}a_{2}a_{3}a_{4}-b_{1}b_{2}b_{3}b_{4}},$ and $g(t)=\mu t+\nu-f(t)$ for
$c$ and $t_{0}$ some fixed points in $(a_{1}^{-1}a_{2}^{-1}b_{3}b_{4},a_{3}a_{4}b_{1}^{-1}b_{2}^{-1}).$
\end{theorem}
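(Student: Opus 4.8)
The plan is to obtain both inequalities from one lower bound for $(A\sigma C)\circ(B\sigma D)$ together with two scalar estimates for $f$, mirroring the proof of Theorem 4.1. Write $m:=a_1^{-1}a_2^{-1}b_3b_4$ and $M:=a_3a_4b_1^{-1}b_2^{-1}$. As in the proof of Theorem 4.1, the hypotheses give $m(A\otimes B)\le C\otimes D\le M(A\otimes B)$, and applying the positive linear map $\Phi(Y)=U^{*}YU$, with $U^{*}(A\otimes B)U=A\circ B$, yields $m(A\circ B)\le C\circ D\le M(A\circ B)$. Hence $T:=(A\circ B)^{-1/2}(C\circ D)(A\circ B)^{-1/2}$ has spectrum in $[m,M]$ and $(A\circ B)\sigma(C\circ D)=(A\circ B)^{1/2}f(T)(A\circ B)^{1/2}$. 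Starting from inequality \eqref{(8)}, applying at the tensor level the chord estimate $f(t)\ge\mu t+\nu$ on $[m,M]$ (valid since $f$ is concave and $\mu t+\nu$ is the secant through $(m,f(m))$ and $(M,f(M))$), and then applying $U^{*}(\cdot)U$ once more, I first record the common lower bound $(A\sigma C)\circ(B\sigma D)\ge\mu(C\circ D)+\nu(A\circ B)=(A\circ B)^{1/2}(\mu T+\nu I)(A\circ B)^{1/2}$. This single estimate drives both parts.

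For (i), I would prove the scalar inequality $f(t)\le\frac{1}{\omega}(\mu t+\nu)$ on $[m,M]$. Since $\mu t+\nu\le f(t)$ there and $\mu t+\nu>0$, the ratio $f(t)/(\mu t+\nu)$ is at least $1$ and attains its maximum at an interior point $c$ characterised by the first-order condition $f'(c)(\mu c+\nu)=\mu f(c)$; at $c$ one has $f(c)/(\mu c+\nu)=f'(c)/\mu=1/\omega$, which is exactly the constant $\omega=\mu/f'(c)$ of the statement. Substituting $T$ into this scalar bound gives $f(T)\le\frac{1}{\omega}(\mu T+\nu I)$, hence $(A\circ B)\sigma(C\circ D)\le\frac{1}{\omega}\big(\mu(C\circ D)+\nu(A\circ B)\big)$, and the common lower bound upgrades the right side to $\frac{1}{\omega}\big((A\sigma C)\circ(B\sigma D)\big)$, which is (i).

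For (ii), I would use the tangent line to $f$ parallel to the chord. With $g(t)=\mu t+\nu-f(t)\le0$ on $[m,M]$, the minimum of $g$ is attained at the interior point $t_0$ where $f'(t_0)=\mu$, so $f(t)\le\mu t+\nu-g(t_0)$ for all $t\in[m,M]$. Applying the functional calculus to $T$ gives $(A\circ B)\sigma(C\circ D)\le\mu(C\circ D)+(\nu-g(t_0))(A\circ B)$; subtracting the common lower bound $(A\sigma C)\circ(B\sigma D)\ge\mu(C\circ D)+\nu(A\circ B)$ cancels the $\mu(C\circ D)+\nu(A\circ B)$ terms and leaves precisely $-g(t_0)(A\circ B)$, which is (ii).

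The routine ingredients, inequality \eqref{(8)} and the spectral localisation of $T$, are inherited from Theorem 4.1. The delicate part, and the main obstacle, is the scalar analysis pinning down $c$ and $t_0$: one must check that these extremisers are interior to $[m,M]$, which uses the strict monotonicity of $f'$ coming from concavity (if $f$ is affine both inequalities are trivial, with $\omega=1$ and $g\equiv0$), and then verify that the resulting first-order conditions reproduce the stated $\omega=\mu/f'(c)$. I would also flag that the left-hand side of (ii) as printed reads $(A\circ B)\sigma(B\circ D)$, which should be $(A\circ B)\sigma(C\circ D)$; the argument above establishes the latter.
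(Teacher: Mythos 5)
Your proof is correct, and its skeleton --- the bound $m(A\otimes B)\le C\otimes D\le M(A\otimes B)$ with $m=a_1^{-1}a_2^{-1}b_3b_4$, $M=a_3a_4b_1^{-1}b_2^{-1}$, inequality \eqref{(8)}, and the compression $\Phi(Y)=U^*YU$ with $U^*(A\otimes B)U=A\circ B$ --- is exactly the paper's. The difference is in how the two reverse inequalities themselves are obtained. The paper treats them as known: it invokes from \cite{KSA} the inequalities \eqref{R1} and \eqref{R2} (multiplicative and additive reverse bounds for $\Phi(A)\sigma\Phi(B)$ under $bA\le B\le aA$), applies them with $A\otimes B$, $C\otimes D$ in place of $A$, $B$, and then uses \eqref{(8)} to replace $\Phi\big((A\otimes B)\sigma(C\otimes D)\big)$ by $(A\sigma C)\circ(B\sigma D)$. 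You instead re-derive those bounds from scratch: the chord estimate $\mu t+\nu\le f(t)$ yielding your common lower bound $(A\sigma C)\circ(B\sigma D)\ge\mu(C\circ D)+\nu(A\circ B)$, the maximization of the ratio $f(t)/(\mu t+\nu)$ whose first-order condition reproduces $\omega=\mu/f'(c)$ for (i), and the tangent parallel to the chord ($f'(t_0)=\mu$) for (ii). This is essentially the content of the cited lemma of \cite{KSA}, so what your route buys is self-containedness: the reader need not consult \cite{KSA}, and your common lower bound cleanly isolates where \eqref{(8)} (hence submultiplicativity of $f$) enters, at the cost of redoing the scalar analysis --- including the interior-extremizer issue, which you close correctly by noting the affine case gives $\omega=1$ and $g\equiv 0$, while otherwise the ratio and the gap are extremized at interior critical points. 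Your correction of the misprint in (ii), reading $(A\circ B)\sigma(C\circ D)$ for $(A\circ B)\sigma(B\circ D)$, agrees with what the paper's own argument actually proves.
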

\begin{proof} It is known \cite{KSA} that if $A,B>0$ be such that $aA \geq B \geq bA$ for some scalars
$a \geq b>0$ and  $\Phi$ is a positive linear map, then for any
connection $\sigma$,
\begin{eqnarray}\label{R1}
\Phi(A) \sigma \Phi(B) \leq \frac {1}{\omega}\Phi(A\sigma B)
\end{eqnarray}
and
\begin{eqnarray}\label{R2}
\Phi(A) \sigma \Phi(B)-\Phi(A\sigma B) \leq
-g(t_{0})\Phi(A)\,,
\end{eqnarray} where
$\omega=\frac{f(a)-f(b)}{(a-b)f'(c)}$ for some fixed $c \in (b,a)$
and $g(t)=\mu t+\nu-f(t)$, $t_{0}$ a fixed point in $(b,a)$ with
$-g(t_{0})\geq 0$, $\mu= \frac {f(a)-f(b)} {a-b}, \nu =\frac {af(b)-bf(a)} {a-b}$ and $f(t)$ is the representing function of $\sigma.$

As in Theorem 4.1, replacing $A$ by $A\otimes B$ and $B$ by $C\otimes D,$ in \eqref{R1} and \eqref{R2} and using inequality \eqref{(8)}, we get

$$\Phi(A\otimes B)\sigma\Phi(C\otimes
D)\leq \frac{1}{\omega}\Phi((A\sigma C)\otimes(B\sigma D))$$ and
$$\Phi(A\otimes B)\sigma \Phi(C\otimes D)-\Phi((A\sigma C)\otimes(B\sigma
D))\leq -g(t_{0})(A\otimes B)\,.$$
Once again (i) and (ii) can be deduced by
considering $\Phi(Y)=U^*YU$, where $U$ an isometry satisfying $U^{*}
(A\otimes B)U=A\circ B$.
\end{proof}
\medskip

\end{document}